\documentclass[a4paper,12pt]{amsart}
\usepackage[
  hmarginratio={1:1},     
  vmarginratio={1:1},     
  textwidth=160mm,        
  textheight=235mm,       
  marginparwidth=27mm,
  marginparsep=3mm
]{geometry}

\usepackage[utf8]{inputenc}
\usepackage{lmodern}
\usepackage{amsthm, amssymb, amscd, accents, bm}
\usepackage{mathtools}
\usepackage{mdwlist}
\usepackage{paralist}
\usepackage{graphicx}
\usepackage{subfigure}
\usepackage{epstopdf}
\usepackage{datetime}
\usepackage{afterpage}
\usepackage[hidelinks]{hyperref}
\usepackage{caption}
\usepackage{float}
\mathtoolsset{showonlyrefs}
\usepackage[sort,nocompress]{cite}
\DeclareMathOperator*{\argmin}{\mathrm{arg\, min}}

\theoremstyle{plain}
\newtheorem{definition}{Definition}[section]
\newtheorem{theorem}[definition]{Theorem}
\newtheorem{lemma}[definition]{Lemma}
\newtheorem{corollary}[definition]{Corollary}
\newtheorem{proposition}[definition]{Proposition}

\theoremstyle{definition}
\newtheorem{remark}[definition]{Remark}

\numberwithin{equation}{section}

\def\N{{\mathbb N}}
\def\Z{{\mathbb Z}}
\def\R{{\mathbb R}}
\def\C{{\mathbb C}}

\def\H{{\mathcal H}}
\newcommand{\toral}{{\ell^2_\lambda(\Z)}}
\newcommand{\foldedRange}[1][\lambda]{R_{#1,X}}
\newcommand{\dist}{\mathrm{dist}}
\newcommand{\id}{{\mathrm{id}}}
\newcommand{\sinc}{{\mathrm{sinc}}}
\newcommand{\ltZZ}{\ell^2(\Z,\Z)}
\newcommand{\ltZZi}{\ell^2(\Z,\Z[i])}
\newcommand{\re}{{\mathrm{Re} \,}}

\newcommand{\F}{{\mathcal{F}}}

\newcommand{\norm}[1]{\lVert #1\rVert}

\begin{document}
\title[Stability in unlimited sampling]{Stability in unlimited sampling}

\author[J. L. Romero]{Jos\'e Luis Romero}
\address[J. L. Romero]{Faculty of Mathematics, University of Vienna, Oskar-Morgenstern-Platz 1, A-1090 Vienna, Austria, and Acoustics Research Institute, Austrian Academy of Sciences, Dr. Ignaz Seipel-Platz 2,	AT-1010 Vienna, Austria}
\email[J. L. Romero]{jose.luis.romero@univie.ac.at}

\author[I. Shafkulovska]{Irina Shafkulovska}
\address[I. Shafkulovska]{Faculty of Mathematics, University of Vienna, Oskar-Morgenstern-Platz 1, A-1090 Vienna, Austria}
\email{irina.shafkulovska@univie.ac.at}

\thanks{J. L. R. and I. S. were funded in part or in whole by the Austrian Science Fund (FWF)  [\href{https://doi.org/10.55776/Y1199}{10.55776/Y1199}]. For open access purposes, the authors have applied a CC BY public copyright license to any author-accepted manuscript version arising from this submission.}
\allowdisplaybreaks
\belowdisplayshortskip0pt

\keywords{bandlimited functions, sampling, folded samples, unlimited sensing framework, stability}

\subjclass{94A20, 94A15, 94A24, 42C15, 42A05}

\begin{abstract}
Folded sampling replaces clipping in analog-to-digital converters by reducing samples modulo a threshold, thereby avoiding saturation artifacts. We study the reconstruction of bandlimited functions from folded samples and show that, for equispaced sampling patterns, the recovery problem is inherently unstable. We then prove that imposing any a priori energy bound restores stability, and that this regularization effect extends to non-uniform sampling geometries. Our analysis recasts folded-sampling stability as an infinite-dimensional lattice shortest-vector problem, which we resolve via harmonic-analytic tools (the spectral profile of Fourier concentration matrices) and, alternatively, via bounds for integer Tschebyschev polynomials. Our work brings context to recent results on injectivity and encoding guarantees for folded sampling and further supports the empirical success of folded sampling under natural energy constraints.
\end{abstract}

\maketitle

\section{Introduction}
\subsection{Sampling rates, stability and folding}

The sampling theory for bandlimited functions links the \emph{bandwidth} of a class of signals to its \emph{Nyquist rate}, that is, the sampling rate at which signals can be faithfully encoded by recording their values at discrete locations. Bandwidth is mathematically described by means of the Fourier transform
$\hat{f}(\xi) = \int_\mathbb{R} f(t) e^{-2\pi i t \xi} \,dt\,$: A finite-energy signal is \emph{bandlimited}, with bandwidth $\Omega$, if it belongs to the \emph{Paley-Wiener space} 
\begin{equation}\label{eq:SWK}
    PW_{\Omega}(\R) = \lbrace f\in L^2(\R): \hat{f}=0 \text{ a.e. outside of }[-\tfrac{\Omega}{2}, \tfrac{\Omega}{2}]\rbrace.
\end{equation}
Let us recapture the basic facts of sampling theory in terms of the \emph{sampling operator} $C_X$ associated with an enumerated set $X = \{x_k: k \in \mathbb{Z}\} \subset \mathbb{R}$ of sampling locations:
\begin{align}
C_X: PW_{\Omega}(\R) \to \ell^2(\mathbb{Z}),
\qquad f \mapsto C_X f = (f(x_k))_{k\in \mathbb{Z}}.
\end{align}
Bandlimited functions $f \in PW_{\Omega}(\R)$ are completely determined by their samples $f(x_k)$, $k \in \mathbb{Z}$, exactly if $C_X$ is injective. When sampling at equispaced points $X= \alpha \mathbb{Z}$, the Nyquist-Shannon theorem \cite{Unser2000} shows that $C_{\alpha \mathbb{Z}}$ is injective if and only if $0<|\alpha| \leq 1/\Omega$. Moreover, in this case, the sampling operator $C_{\alpha \mathbb{Z}}$ admits a \emph{continuous left-inverse}, that is, a continuous map $D_X: \mathrm{Range}(C_{\alpha \mathbb{Z}}) \to PW_{\Omega}(\R)$ such that 
\begin{align}\label{eq_intro_cd}
D_X C_X f = f, \qquad f \in PW_{\Omega}(\R).
\end{align}
(Here, $\mathrm{Range}(C_{\alpha \mathbb{Z}})$ is the image or range of the sampling operator.) We can think of $D_X$ as a reconstruction operator that decodes the samples of a signal. In practical terms, the existence and continuity of the map $D_X$ means that signal samples can be reliably quantized, because similar collections of samples must correspond to similar signals. In this case, we speak of \emph{stable sampling}.

The distinction between injectivity and stability is more important for non-uniform sampling patterns. For example, the 
sampling operator $C_X$ associated with the set 
\begin{align}\label{eq_intro_X}
X=\{ 0.45 k : k \in \mathbb{N}\}
\end{align}
is injective but does not admit a continuous left inverse as in \eqref{eq_intro_cd}. In fact, sampling stability is related to approximate uniformity and equidistribution of sampling locations. Indeed, the celebrated sampling theorems of Duffin-Schaeffer and Beurling almost characterize the stability of a sampling pattern $X=\{x_k: k \in \mathbb{Z}\}$ in terms of its \emph{lower Beurling density}:
\begin{align}
D^{-}(X) = \liminf\limits_{r\to\infty}\inf\limits_{t\in\R} \tfrac{\#X\cap [t-r,t+r]}{2r}.
\end{align}
Assuming that there is a minimum separation between distinct points of $X$, the sampling operator $C_X$ admits a continuous left-inverse, as in \eqref{eq_intro_cd}, if its density exceeds the signals' bandwidth:
\begin{align}
D^{-}(X) > \Omega,
\end{align}
and only if $D^{-}(X) \geq \Omega$ (Landau's benchmark \cite{Landau1967, Landau1967IEEE}). On the other hand, the mere injectivity of $C_X$ is possible with very sparse sampling patterns such as \eqref{eq_intro_X}. In this case, the sampling encoding is fragile and not compatible with quantization.

The goal of this article is to revisit the topic of injectivity and stability of sampling in the context of a modern (non-linear) sampling method called \emph{folded sampling} \cite{BhandariEtAl2017}. This framework is motivated by practical restrictions on sampling architectures and, specifically, by the need to account for the maximal \emph{amplitude} $\lambda$ that recorded sampling values can assume in practice. When faced with samples having a high dynamic range, analog-to-digital converters (ADC) can easily reach their saturation threshold $\lambda$ and be forced to clip the numerical value: \[f(x) \mapsto \mathrm{sgn}(f(x)) \cdot \min\{|f(x)|, \lambda\},\]
which adds over-bandwidth frequencies and results in significant artifacts \cite{LinEtAl2025Seismic,DiezGarciaCamps2019, Gray1990,Ballou2015,MyszkowskiEtAl2024}.

As an alternative to clipping, Bhandari, Krahmer, and Raskar \cite{BhandariEtAl2017} proposed reacting to ADC saturation with a \emph{folding operation} that also maps arbitrary samples values into the admissible range $[-\lambda,\lambda]$. Precisely, the \emph{centered $\lambda$-folding} of a real number $x$ is the unique number $\mathcal{M}_\lambda(x) \in [-\lambda,\lambda)$ such that 
\begin{align}\label{eq_intro_mod}
x = \mathcal{M}_\lambda(x) + 2 \lambda n, \qquad \text{for some }n \in \mathbb{Z};
\end{align}
see Figure \ref{fig_1}.
In terms of the usual integer floor $\lfloor x \rfloor$ and decimal part $\{x\}= x-\lfloor x\rfloor$, the folding is implemented as 
\begin{equation}\label{eq_intro_fold2}
    \mathcal{M}_\lambda(x) = 2\lambda \left( \left\{ \tfrac{x}{2\lambda}+\tfrac{1}{2}\right\} -\tfrac{1}{2}\right).
\end{equation}
One also extends folding to complex numbers \cite{FernandezMenduinaEtAl2022}
by $\mathcal{M}_\lambda (x+iy) = \mathcal{M}_\lambda x + i \mathcal{M}_\lambda y$.
\begin{figure}[h!b]
	\subfigure{
		\includegraphics[width=.3\textwidth]{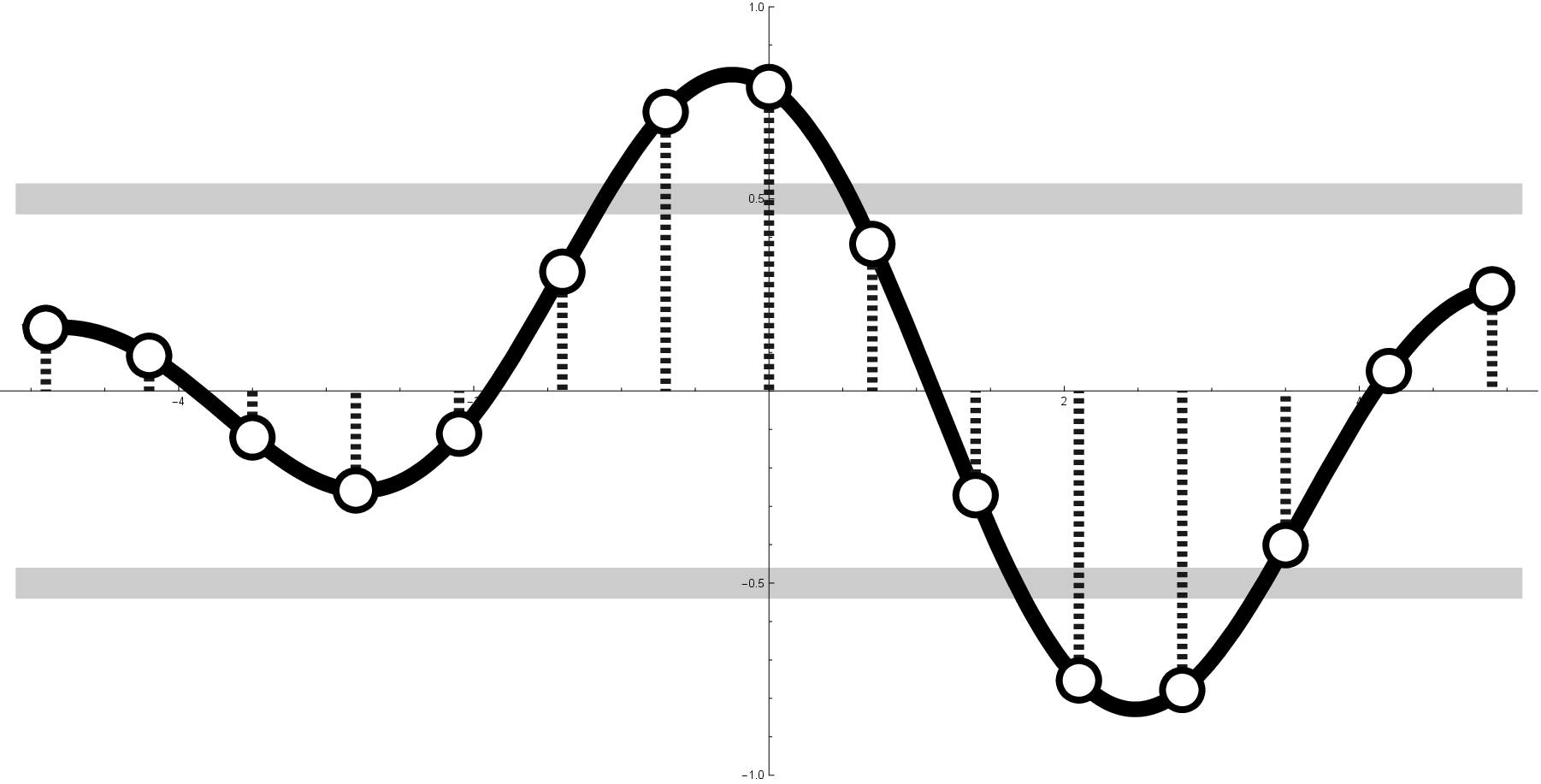}
	}
	\hfill
	\subfigure{
		\includegraphics[width=.3\textwidth]{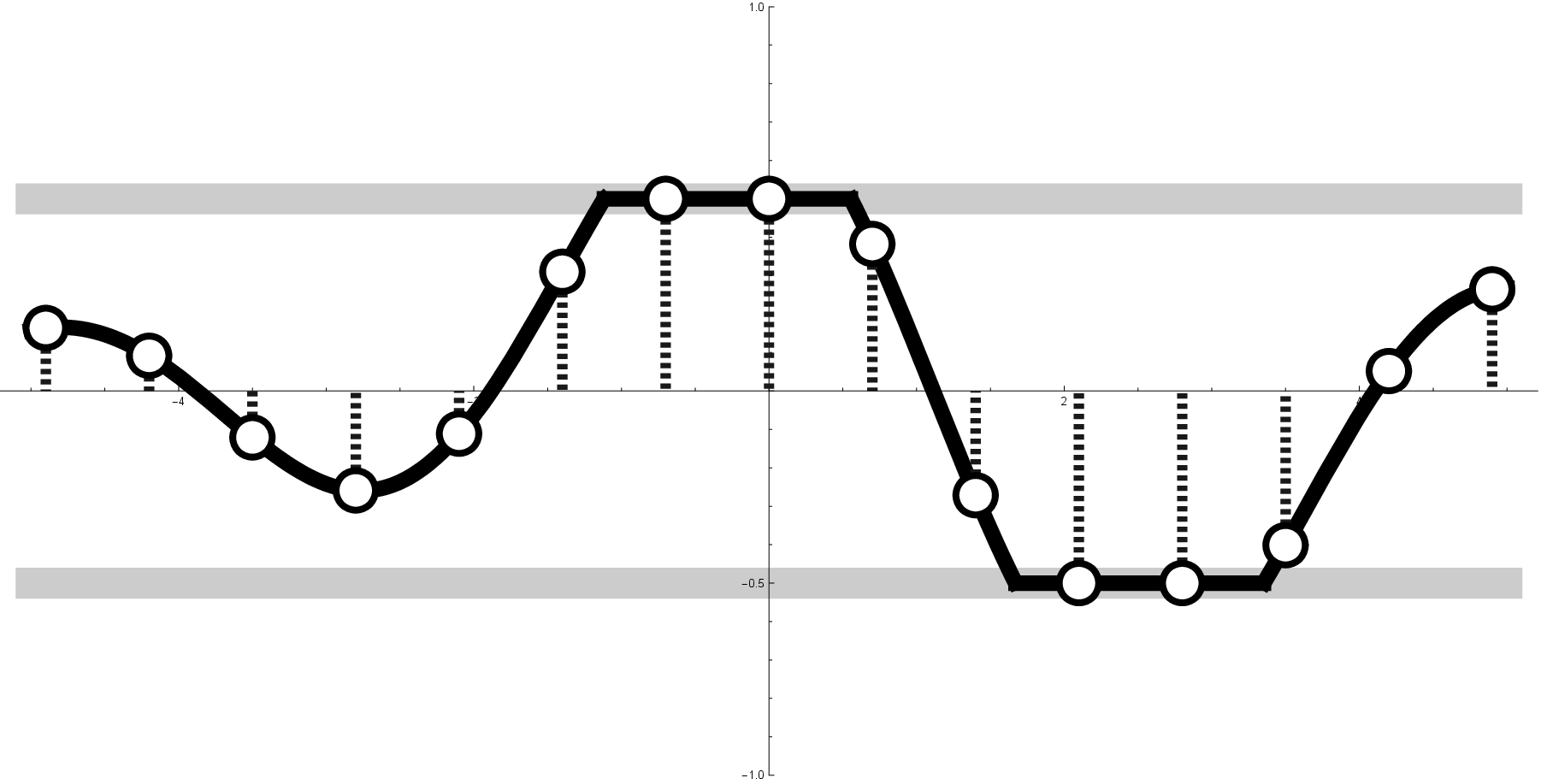}
	}
    \hfill
    \subfigure{		\includegraphics[width=.3\textwidth]{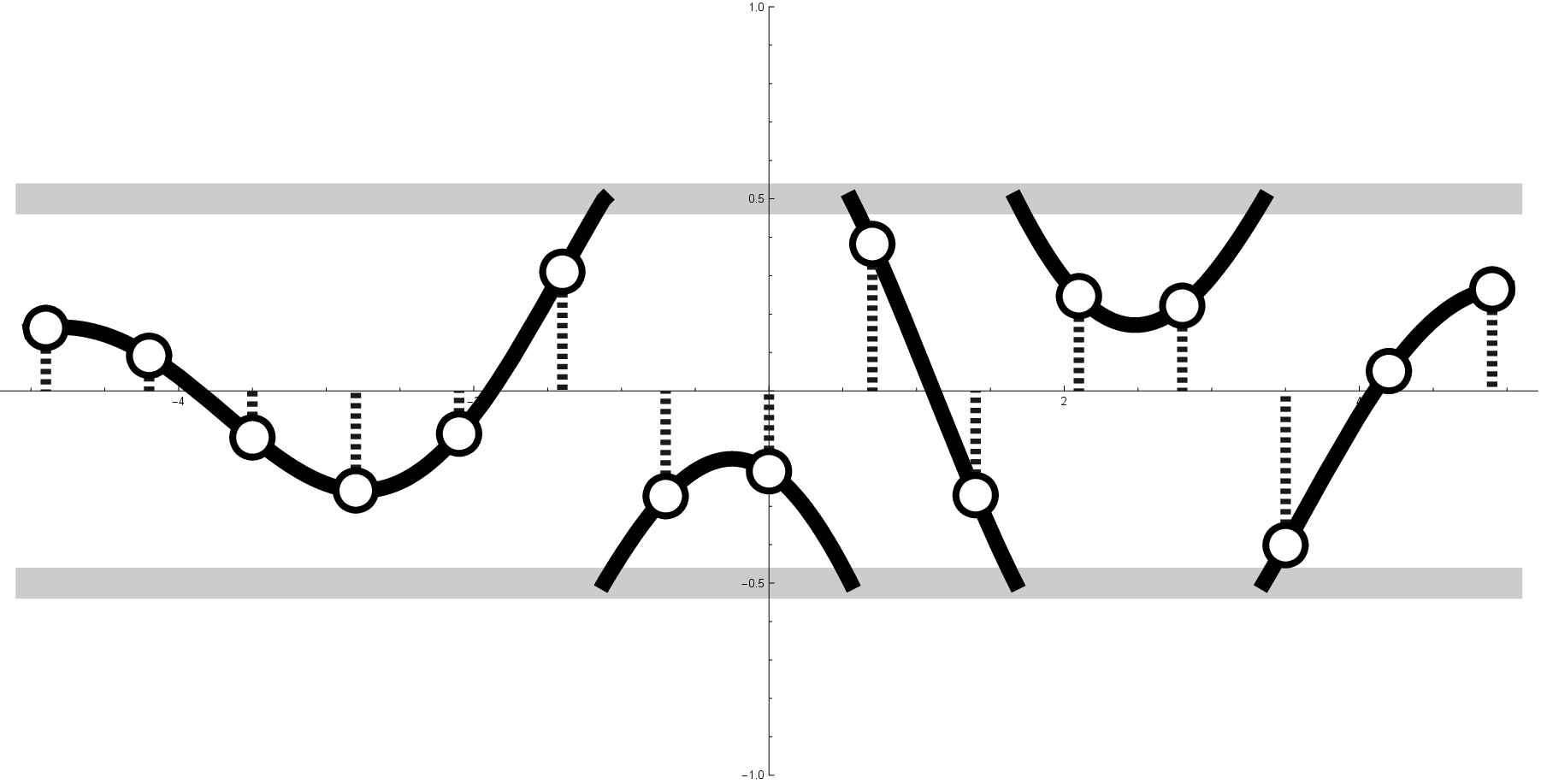}
    }
\caption{(a) Ideal sampling (expensive hardware); (b) clipped samples due to ADC saturation; (c) folding architecture - for dense sampling patterns, one can guess how to piece back the original functions, but less redundant sampling presents a challenge.}\label{fig_1}
\end{figure}
This sampling and folding framework was dubbed \emph{unlimited sensing} (US) \cite{BhandariEtAl2017} because it aims to avoid saturation-type limitations without sacrificing bits of precision. A great number of recent research has been devoted to validating the success of US, including theoretical recovery guarantees \cite{BhandariKrahmer2019,RomanovOrdentlich2019}, 
reconstruction algorithms \cite{BhandariEtAl2021_reconstruction, FlorescuEtAl2021,FlorescuBhandari2022,BhandariKrahmerPoskitt2022} 
and extensions to various novel settings \cite{AlharbiEtAl2025, AbdallaEtAl2025,RomanovOrdentlich2021,ZhangBhandari2023,rudresh2018wavelet_unlimited_sampling}. In parallel, US has been implemented in hardware \cite{BhandariKrahmerPoskitt2022}, with numerous continuous improvements \cite{ZhuBhandari2024,Mulleti2023HardwarePrototypeHDRADC,zhu2025ironing}.

In order to succeed, unlimited sensing requires a certain level of sampling redundancy. Indeed, the cardinal sine, $\sinc(x)=\tfrac{\sin(\pi x)}{\pi x}$, 
has bandwidth $\Omega=1$ - that is, $\sinc \in PW_1(\mathbb{R})$ - but takes integer values on the grid $X=\mathbb{Z}$. Thus, on this grid of critical density (Nyquist rate), folded sampling with $\lambda=1/2$ cannot distinguish $\sinc$ from the zero function. On the other hand, based on substantial numerical experience, practitioners expect that a moderate amount of sampling redundancy (over-Nyquist rate) provides enough extra information to effectively encode all bandlimited signals, even after discarding the integer component $n$ in \eqref{eq_intro_mod}. This was rigorously confirmed in \cite{BhandariKrahmer2019} and \cite{RomanovOrdentlich2019}, by proving the following:
\begin{theorem} \label{thm_injectivity}
Let $\Omega, \lambda >0$ and $0<\alpha<1/\Omega$. Then
the folded sampling operator
\begin{align}\label{eq_intro_fo}
\mathcal{M}_{\lambda,\alpha\Z}: PW_\Omega(\mathbb{R}) \to \ell^2(\mathbb{Z}), \qquad
f \mapsto \mathcal{M}_{\lambda,\alpha\Z} f = (\mathcal{M}_\lambda f(\alpha k))_{k\in\Z}
\end{align}
is injective.
\end{theorem}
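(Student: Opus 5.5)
Suppose $f,g\in PW_\Omega(\R)$ have the same folded samples and set $h=f-g\in PW_\Omega(\R)$. By \eqref{eq_intro_mod}, $h(\alpha k)=2\lambda n_k$ with $n_k\in\Z$, so the samples of $h$ lie in $2\lambda\Z$. If $f,g$ are complex-valued, real and imaginary parts give $n_k\in\Z[i]$, and the argument below is identical. Since $h\in PW_\Omega$ and $\alpha\Z$ is a set of stable sampling (indeed $\sum_k|h(\alpha k)|^2=\alpha^{-1}\|h\|_2^2<\infty$, because $\alpha<1/\Omega$ makes the exponentials on the band orthogonal after periodization), the sequence $(n_k)$ is in $\ell^2$. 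An integer-valued $\ell^2$ sequence is finitely supported. Thus the plan is to show that a nonzero $h\in PW_\Omega$ cannot have samples on $\alpha\Z$ that are finitely supported and integer-valued. Injectivity of $\mathcal{M}_{\lambda,\alpha\Z}$ then follows, since $h=0$ forces $f=g$.

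To do this, pass to the Fourier side. Let $P(\xi)=\sum_k h(\alpha k)e^{-2\pi i\alpha k\xi}$, a trigonometric polynomial with period $1/\alpha$. By Poisson summation, $P(\xi)=\alpha^{-1}\sum_j\hat h(\xi-j/\alpha)$. Since $\hat h$ is supported in $[-\Omega/2,\Omega/2]$ and $1/\alpha>\Omega$, the translates do not overlap, and within one period they leave a gap. Concretely, on the nonempty open set $U=(\Omega/2,\,1/\alpha-\Omega/2)$, modulo $1/\alpha$, no translate contributes, so $P\equiv0$ on $U$. A trigonometric polynomial, or more generally any entire function in $z=e^{-2\pi i\alpha\xi}$, that vanishes on an interval vanishes identically. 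Hence all coefficients $h(\alpha k)$ are zero. Then $\hat h=\alpha P\cdot\mathbf 1_{[-\Omega/2,\Omega/2]}=0$, so $h=0$.

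The crux is the passage from $\ell^2$ to finite support, which uses the integrality $n_k\in\Z$, together with the spectral gap supplied by the strict inequality $\alpha<1/\Omega$. Each step is routine once we check that the Poisson/periodization identity holds in $L^2$ of a period. That identity is standard for $PW_\Omega$ functions sampled above the Nyquist rate. I expect the only care needed is this justification. Note also that $\lambda$ plays no role, which matches the statement: injectivity holds for every $\lambda>0$. The critical case $\alpha=1/\Omega$ fails precisely because $U$ becomes empty, as the $\sinc$ example shows.
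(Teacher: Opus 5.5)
Your proof is correct, but at the decisive step it takes a different route from the paper. The paper does not prove this theorem directly; it quotes it from Bhandari--Krahmer and Romanov--Ordentlich. It recovers it as the special case $X=\alpha\Z$ of Corollary \ref{thm:main_inj_X}, via Step 3 of the proof of Theorem \ref{thm:main_bounded}.

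The first half of that argument is the same as yours. The samples of $h=f-g$ lie in $2\lambda\Z[i]$ and are square-summable, so the set $F_h$ of points where $h$ has nonzero samples is finite. The paper then simply discards $F_h$. The function $h$ vanishes on $X\setminus F_h$, which has the same separation and the same lower Beurling density $>\Omega$. Hence $X\setminus F_h$ is a sampling set by Beurling's theorem (in the uniform form of Theorem \ref{thm:bound_dependency}), which forces $h=0$.

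You instead use the lattice structure. Poisson summation turns the finitely supported sample sequence into a trigonometric polynomial. That polynomial must vanish on the spectral gap $(\Omega/2,1/\alpha-\Omega/2)$, hence identically.

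Your route is elementary and self-contained, with no balayage or weak-limit arguments. It also fits neatly with Section \ref{sec:lattice_tools}. In the normalization $\Omega=1$, it says exactly that $\norm{n-P_{\alpha\Z}n}^2=\int_{\alpha/2}^{1-\alpha/2}|\hat n(t)|^2\,dt>0$ for every nonzero $n\in\ltZZi$. Theorem \ref{thm:main} shows that the infimum of these quantities is nevertheless $0$: injectivity without stability.

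The paper's route needs no Toeplitz or Poisson structure, so it works for arbitrary separated $X$ with $D^-(X)>\Omega$. Moreover, $\#F_h\le\lambda^{-2}B_XM^2$ under an a priori bound $\norm{h}\le M$. The lower frame bound of $X\setminus F_h$ is therefore uniform, which yields the Lipschitz estimate on bounded sets. Your vanishing argument, by contrast, is purely qualitative.

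Three minor wording fixes:
\begin{itemize}
\item A trigonometric polynomial is a Laurent polynomial in $z=e^{-2\pi i\alpha\xi}$, not an entire function of $z$. This still suffices, since it vanishes on an arc.
\item The periodization identity holds almost everywhere. You should invoke the continuity of $P$ to conclude $P\equiv0$ on all of $U$.
\item The $\ell^2$ property of the samples follows from Parseval. The system $\{\sqrt{\alpha}\,e^{2\pi i\alpha k\xi}\}_k$ is an orthonormal basis on an interval of length $1/\alpha\geq\Omega$ containing the band. Only the Bessel bound is actually needed here.
\end{itemize}
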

In the words of Romanov and Ordentlich \cite{RomanovOrdentlich2019}, "Above the Nyquist Rate, Modulo Folding Does Not Hurt". Remarkably, the encoding guarantees in Theorem \ref{thm_injectivity} are \emph{independent} of $\lambda$, which means that their theoretical precision is independent of the available bit budget.

\section{Results}\label{sec_res}

In this article, we revisit Theorem \ref{thm_injectivity} from the point of view of stability. One first objection to this endeavor is that the folded sampling operator \eqref{eq_intro_fo} is not continuous, as the folding operation 
\eqref{eq_intro_fold2} has a discontinuity at every integer multiple of $\lambda$, see Section \ref{sec:new-crit}. We compensate for this by considering the \emph{toral (semi) metric}
\begin{align}\label{eq_toral_metric}
|z-w|_\lambda = \inf\limits_{n\in\Z+i\Z}|z-w-2\lambda n|, \qquad z,w \in \mathbb{C},
\end{align}
which makes the folding \eqref{eq_intro_fold2} continuous. Similarly, we consider
the space of square-summable sequences in the allowed folding range
\begin{align}
\toral = \{a \in \ell^2(\mathbb{Z}), a_k \in [-\lambda,\lambda)+ i [-\lambda,\lambda)\}
\end{align}
and endow it with the distance
\begin{align}
\|a-b\|^2_{\toral} = \sum_{k\in\mathbb{Z}} |a_k-b_k|_\lambda^2,
\end{align}
so that the folded sampling operator $C_X: PW_\Omega(\mathbb{R}) \to \ell_\lambda^2(\mathbb{Z})$ is continuous --- see Section \ref{sec:new-crit}. Our first result reads as follows.
\begin{theorem}[Instability of folded sampling with unbounded inputs]\label{thm:main}
Let $\Omega, \lambda, \alpha >0$. Then the folded sampling operator
  \begin{equation}\label{eq_intro_fso}
     \mathcal{M}_{\lambda,\alpha\Z}: PW_\Omega(\mathbb{R}) \to \toral, \qquad
f \mapsto \mathcal{M}_{\lambda,\alpha\Z} f = (\mathcal{M}_\lambda f(\alpha k))_{k\in\Z}
  \end{equation}
  does not admit a left-inverse
  $\mathcal{T}_{\lambda, \alpha\Z}: \mathrm{Range}(\mathcal{M}_{\lambda,\alpha\Z}) \to PW_\Omega(\mathbb{R})$
  which is continuous anywhere. More concretely,
      for all $f\in PW_\Omega(\R)$ there exists an $\varepsilon>0$ with the following property: for all $\delta>0$ there exists a function $g\in PW_\Omega(\R)$ such that
      \begin{equation}
          \norm{\mathcal{M}_{\lambda,X} f - \mathcal{M}_{\lambda,X} g}_{\toral}<\delta,\text{\quad but \quad}
          \norm{f- g}_{L^2(\R)}>\varepsilon.
      \end{equation}
      The statement remains true even if we restrict the operator to real-valued functions.
\end{theorem}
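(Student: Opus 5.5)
We will reduce the statement to the construction of functions $h\in PW_\Omega(\R)$ that have large $L^2$ norm but whose samples on $\alpha\Z$ all lie very close, in the toral semimetric, to the lattice $2\lambda(\Z+i\Z)$ (for real-valued $h$, to $2\lambda\Z$). Given such an $h$, we set $g=f+h$. Since $\mathcal{M}_\lambda$ is invariant under translation by $2\lambda n$, and $|\cdot|_\lambda$ is a translation-invariant semimetric, we have
\begin{align}
\norm{\mathcal{M}_{\lambda,\alpha\Z} f-\mathcal{M}_{\lambda,\alpha\Z} g}_{\toral}^2=\sum_k |h(\alpha k)|_\lambda^2 .
\end{align}
This quantity depends only on $h$. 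Meanwhile $\norm{f-g}_{L^2}=\norm{h}_{L^2}$. So it suffices to prove the following: there is $\varepsilon>0$ (we may even take $\varepsilon=1$, independent of $f$) such that for every $\delta>0$ there exists a real-valued $h\in PW_\Omega(\R)$ with $\sum_k \dist(h(\alpha k),2\lambda\Z)^2<\delta^2$ and $\norm{h}_2>\varepsilon$. The same $h$ works in the complex setting.

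For the construction, the natural idea is to make $h$ equal to an exact lattice value (a multiple of $2\lambda$) on a long block of sample points, and then let it decay gently so that the tails contribute little. Concretely, fix a real Schwartz function $\varphi\in PW_\Omega(\R)$ with $\varphi(0)=1$, for instance $\varphi=\sinc(\Omega t/2)^2$ suitably normalized. Consider the dilations
\begin{align}
h_R(t)=2\lambda\,\varphi(t/R), \qquad R\ge 1.
\end{align}
Dilation by $R\ge1$ shrinks the spectrum, so $h_R\in PW_\Omega(\R)$. Moreover $\norm{h_R}_2=2\lambda\sqrt R\,\norm{\varphi}_2\to\infty$. The samples $h_R(\alpha k)=2\lambda\varphi(\alpha k/R)$ vary slowly. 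Near $k=0$ they sit close to $2\lambda$, but they must eventually pass through non-lattice values on their way to $0$. This yields a toral error of order $\sum_k \dist(2\lambda\varphi(\alpha k/R),2\lambda\Z)^2$, and for a slowly varying profile this error grows like $R$ rather than vanishing. So pure dilation fails, and the transitions between lattice levels must be made asymptotically cheap.

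The remedy is to use a constant profile but spread it over a growing window with a vanishing amplitude of error. The cleanest route is to exploit redundancy in the difference between dimension counts. On $M$ consecutive samples the lattice condition imposes $M$ constraints, yet $PW_\Omega$ restricted in concentration to an interval of length $L$ has roughly $\Omega L$ degrees of freedom. When $\alpha\ge 1/\Omega$ this makes the construction easy; for $\alpha<1/\Omega$ we instead let the amplitude grow. The plan for the general case is as follows. Choose the target sequence $c_k=2\lambda$ for $|k|\le N$ and $c_k=0$ otherwise, convolved with nothing. Then take $h$ to be the minimal-norm element of $PW_\Omega$ interpolating a smoothed version of it. Equivalently, take $h_N(t)=2\lambda\,\varphi(t/R)$ rounded at the level of samples, $h_N(\alpha k)=2\lambda\,\mathrm{round}(m\varphi(\alpha k/R))$ with a large height $m$. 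Here the smooth function $m\varphi(\cdot/R)$ is replaced by a bandlimited function whose samples are exactly integers times $2\lambda$.

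The hard step, which is where we expect the main obstacle, is controlling the bandlimited correction needed to turn integer-rounded samples back into a $PW_\Omega$ function. The rounding error is of size up to $1/2$ per sample over about $R$ samples, and we need the correction to be small in $\ell^2$ relative to the toral distance while the total norm stays large. To handle it, we would use that $\alpha\Z$ is a stable sampling set for a slightly larger band $\Omega'>\Omega$ only when $\alpha<1/\Omega'$. This suggests first trying a one-level step function: build $h$ with samples exactly $2\lambda\cdot\mathbf 1_{|k|\le N}$ up to an $\ell^2$-small error. Such an $h$ exists in $PW_\Omega$ with error small only if the discrete Fourier transform of the smoothed indicator is concentrated in $[-\alpha\Omega/2,\alpha\Omega/2]$. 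A smoothed indicator with ramps of length $K$ has out-of-band energy of order $N/K^{2}$ (from ramp slopes) and toral error of order $K$ from the ramps themselves. This trade-off is the crux. We would try to beat it by letting the ramp climb through many lattice levels, at height $mK$ with slope $\approx 2\lambda$ per sample-step near integer times, so that ramp samples land near lattice points. This is a discrete Tschebyschev-type problem of finding integer-valued sequences with spectrum in a band, i.e.\ a lattice short-vector question. We expect to settle it by appealing to the existence of integer sequences with small out-of-band energy, namely the integer Tschebyschev bounds mentioned in the abstract.
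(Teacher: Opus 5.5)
Your reduction is correct and is the same as the paper's (Remark~\ref{rem:critical_f} together with Proposition~\ref{prop:inf_crit}). With $g=f+h$ and translation invariance of $|\cdot|_\lambda$, it suffices to produce real $h\in PW_\Omega(\R)$ with $\sum_k\dist(h(\alpha k),2\lambda\Z)^2\to0$ and $\norm{h}_{L^2(\R)}$ bounded below.

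However, the proposal stops exactly where the content of the theorem begins. For $0<\alpha\Omega<1$ you never show that such $h$ exist; the last paragraph only says you expect integer Tschebyschev bounds to settle it. The missing step is $\Delta_{\alpha\Z,\R}=0$. Normalizing $\Omega=1$, this is the existence of nonzero $n\in\ltZZ$ with
\[
\norm{n-P_{\alpha\Z}n}_{\ell^2(\Z)}^2=\int_{\alpha/2\le|t|\le1/2}|\hat n(t)|^2\,dt\longrightarrow0 .
\]
Once this is available, no rounding or interpolation is needed. Set $h:=2\lambda\,C_{\alpha\Z}^{-1}P_{\alpha\Z}n$. It is real, and its toral sample error is at most $2\lambda\norm{n-P_{\alpha\Z}n}_{\ell^2(\Z)}$. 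Since $\norm{n}_{\ell^2(\Z)}\ge1$, also $\norm{h}_{L^2(\R)}=2\lambda\sqrt{\alpha}\,\norm{P_{\alpha\Z}n}_{\ell^2(\Z)}\ge2\lambda\sqrt{\alpha}\,(1-\norm{n-P_{\alpha\Z}n}_{\ell^2(\Z)})$. Integer multiples of $h$ then give any prescribed $\varepsilon$. The cases $\alpha\Omega\ge1$ are handled separately, by a nonzero function vanishing on $\alpha\Z$, resp.\ by multiples of $2\lambda\,\sinc$.

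This lemma is the heart of the matter, and your heuristics do not deliver it. The dimension count points the wrong way when $\alpha\Omega<1$: there are more sample constraints than degrees of freedom. Plateau or linear-ramp profiles cannot work either. Take the trapezoid $n=\mathbf{1}_{[0,K)}*\mathbf{1}_{[0,L)}$, whose ramps climb one lattice level per sample. Then $|\hat n(t)|=|\sin(\pi Kt)\sin(\pi Lt)|/\sin^2(\pi t)$, and its energy on $\alpha/2\le|t|\le1/2$ stays bounded away from $0$ as $K,L\to\infty$.

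What makes the lemma true is a covolume phenomenon, and the paper proves it in two ways:
\begin{enumerate}[(a)]
\item Minkowski's bound for the lattice $Q_{1-\alpha,N}^{1/2}\Z^N$. Its determinant is at most $e^{-cN^2}$ because a fixed proportion of the prolate eigenvalues are $\le e^{-\kappa N}$ (Slepian), and this beats the factor $\sqrt N$.
\item The Fekete--Kashin bound for integer Tschebyschev polynomials $T^{\Z}_N$ on $[2\cos(\pi(1-\alpha)),2]$, an interval of length $<4$. Composing with $2\cos(2\pi t)$ yields integer sequences whose transform is $O(\sqrt N\rho^{N/2})$ on $|t|\le(1-\alpha)/2$, with $\rho=\sin^2(\pi(1-\alpha)/2)<1$. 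The modulation $n_k\mapsto(-1)^kn_k$ then moves this smallness to the out-of-band region.
\end{enumerate}

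The smooth large-amplitude integer bump you are reaching for does exist: $n_k=\binom{N}{k}$, with $|\hat n(t)|=|2\cos(\pi t)|^N$. But it only works for $\alpha\Omega>2/3$; this is the paper's explicit example. Below that threshold you need one of the two arguments above, and without it the proof is incomplete.
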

Theorem \ref{thm:main} implies that the encoding guarantee of Theorem \ref{thm_injectivity} is fragile in the sense that no signal $f$ can be encoded with arbitrary precision, no matter how many bits are invested in recording its modulo samples.

While Theorem \ref{thm:main} brings some context to \cite{BhandariKrahmer2019} and \cite{RomanovOrdentlich2019}, it certainly does not negate their main conclusion that folded sampling is a very effective encoder. In fact, the recovery guarantees in 
\cite{YanEtAl2025,BhandariEtAl2021_reconstruction,BhandariKrahmer2019,RomanovOrdentlich2019} are proven by constructive means that imply a certain level of robustness and stability, albeit for \emph{a priori bounded inputs}. As a second contribution, we investigate this phenomenon in general and prove the following.

\begin{theorem}\label{thm:main_bounded}
Let $\Omega, \lambda>0$, and let $X=\{x_k:k\in\mathbb{Z}\}\subseteq \R$ be a uniformly separated set with lower Beurling density
\begin{equation}\label{eq:def:lower_beurling}
    D^-(X) = \liminf\limits_{r\to\infty}\inf\limits_{t\in\R} \tfrac{\#X\cap [t-r,t+r]}{2r}>\Omega.
\end{equation}
Let $\mathcal{B} \subseteq PW_\Omega(\R)$ be a bounded set. Then the restriction of the folded sampling operator
\begin{equation}\label{eq_intro_fso_B}
     \mathcal{M}_{\lambda,X}: PW_\Omega(\mathbb{R}) \to \toral, \qquad
f \mapsto \mathcal{M}_{\lambda,X} f = (\mathcal{M}_\lambda f(x_k))_{k\in\Z}
\end{equation}
to $\mathcal{B}$ has a Lipschitz continuous left inverse on its range.\ That is, there exists a positive constant $C=C_{\mathcal{B},\lambda,X}>0$ such that:
    \begin{equation}
        \norm{f-g}_{L^2(\R)} \leq C \norm{\mathcal{M}_{\lambda,X} f -\mathcal{M}_{\lambda,X} g}_{\toral}, \qquad
        f,g\in \mathcal{B}.
    \end{equation}    
\end{theorem}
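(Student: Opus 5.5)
We set $h=f-g$, which lies in the bounded set $\mathcal{B}-\mathcal{B}\subseteq\{h\in PW_\Omega(\R):\norm{h}_{L^2}\le 2R\}$, where $R=\sup_{\mathcal B}\norm{f}$. Since folding is additive modulo the lattice $2\lambda(\Z+i\Z)$, we have $|\mathcal{M}_\lambda f(x_k)-\mathcal{M}_\lambda g(x_k)|_\lambda = |h(x_k)|_\lambda$. So it suffices to show
\[
\norm{h}_{L^2}\le C\Big(\sum_k |h(x_k)|_\lambda^2\Big)^{1/2}
\]
for all $h\in PW_\Omega$ with $\norm{h}\le 2R$.

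For each $k$, choose $n_k\in\Z+i\Z$ attaining the infimum, so that $h(x_k)=2\lambda n_k+r_k$ with $|r_k|=|h(x_k)|_\lambda$. Since $X$ is separated, the Plancherel–Pólya inequality gives $\sum_k|h(x_k)|^2\le B\norm{h}^2\le 4BR^2$. Hence $n=(n_k)$ is an integer sequence with $\norm{n}_{\ell^2}\le M$, where $M$ depends only on $R,\lambda,X$. In particular $n$ has at most $N_0\lesssim M^2$ nonzero entries.

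\textbf{Case $n=0$.} Then $r_k=h(x_k)$, and the stable sampling inequality $A\norm{h}^2\le\sum|h(x_k)|^2$ (Beurling/Landau, since $D^-(X)>\Omega$) gives the claim with $C=A^{-1/2}$.

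\textbf{Case $n\neq 0$.} Here the aim is a uniform lower bound $\sum_k|r_k|^2\ge c_0>0$. The bound $\norm{h}\le 2\sqrt{BR^2/B}$-type then gives $\norm{h}\le 2R\le (2R/\sqrt{c_0})\,\norm{r}$, and the theorem follows with $C=\max(A^{-1/2},2R/\sqrt{c_0})$. Equivalently, the aim is
\[
\inf\Big\{\dist_{\ell^2}\big(2\lambda n,\ C_X(PW_\Omega)\big) : n\in(\Z+i\Z)^{\Z}\setminus\{0\},\ \norm{n}_{\ell^2}\le M\Big\}>0,
\]
a shortest-vector problem for the integer lattice projected onto the orthogonal complement of the range of $C_X$. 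Since $D^-(X)>\Omega$, the range $V=C_X(PW_\Omega)$ is a closed subspace of $\ell^2$ of "density" $\Omega/D^-(X)<1$. The quantity to bound below is $\norm{P_{V^\perp}n}$.

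Suppose the infimum were $0$. Take $n^{(j)}$ with $\norm{P_{V^\perp}n^{(j)}}\to0$. By translation invariance of the problem up to the weak-limit structure of $X$, we may shift so that each $n^{(j)}$ has a nonzero entry in a fixed window; the shifted sets $X$ converge weakly to sets $X'$ with $D^-(X')\ge D^-(X)>\Omega$, still separated. Since the supports have bounded cardinality and entries are bounded integers, each $n^{(j)}$ splits into clusters that are far apart. Passing to subsequences, the cluster containing the origin converges to a finitely supported nonzero integer vector $n^\ast$. Using localization of $PW_\Omega$ functions (the reproducing kernel decays), far-apart clusters decouple, so $n^\ast$ satisfies $2\lambda n^\ast = (h^\ast(x'_k))_k$ exactly for some $h^\ast\in PW_\Omega$ and the limit set $X'$.

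So the remaining task is to rule out a nonzero $h\in PW_\Omega$ taking values in $2\lambda(\Z+i\Z)$ on $X'$ and vanishing on all but finitely many points of $X'$. Since $X'$ minus finitely many points is still a uniqueness set (its density exceeds $\Omega$, and removing finitely many points preserves being a set of sampling), $h$ would be determined by its zero values there, forcing $h=0$ and hence $n^\ast=0$, a contradiction.

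The main obstacle is making the compactness/decoupling step rigorous: showing that near-lattice vectors of bounded norm cannot spread their mass so thinly that no cluster survives in the limit. The integer entries prevent this, since each nonzero entry has modulus at least $1$, together with uniform kernel localization. I would first attempt a direct quantitative version: for finitely supported integer $n$, bound $\norm{P_{V^\perp}n}^2\ge c\,\#\mathrm{supp}(n)$ using the spectral profile of concentration operators on windows around the support. That version avoids the weak-limit machinery.
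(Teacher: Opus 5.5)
Your argument is correct in outline, but it takes a different route from the paper. The paper never passes through a distance problem here. It observes that $C_Xh$ and $\mathcal{M}_{\lambda,X}h$ differ only on a set $F_h$ with $\#F_h\le 4\lambda^{-2}B_XR^2$. It then invokes Theorem~\ref{thm:bound_dependency}, which says lower frame bounds depend only on the separation, the density excess and the scale $r$. This gives a single constant $A>0$ with $A\le A_{X\setminus F_h}$ for all admissible $h$, and $A\norm{h}^2\le\sum_{k\notin F_h}|h(x_k)|^2\le\norm{\mathcal{M}_{\lambda,X}h}^2$ follows at once, with no case split.

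You instead separate $n=0$ from $n\neq0$. You then show by contradiction that the restricted distance $\inf\{\dist(2\lambda n,R_X): n\neq0,\ \norm{n}\le M\}$ is positive, using weak limits of translates of $X$. For this you need only the qualitative fact that a single limit set $X'$ minus finitely many points is a uniqueness set.

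What your route buys:
\begin{itemize}
\item You avoid uniform frame bounds for the family $\{X\setminus F\}$. The compactness that the paper hides inside Theorem~\ref{thm:bound_dependency} (Bernstein space plus Beurling localization) is done directly in $PW_\Omega$.
\item It meshes with the distance criterion of Section~\ref{sec:new-crit}, showing exactly how an a priori bound restores positivity of the lattice distance that vanishes in Theorem~\ref{thm:main}.
\end{itemize}
What the paper's route buys: a one-line inequality whose constant visibly depends on $R$ and $\lambda$ through $r\gtrsim B_XR^2/(\lambda^2\varepsilon)$. Your $c_0$ is purely existential.

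The step you flag as the main obstacle needs neither kernel decay nor decoupling of clusters. Take $h_j$ with $\norm{h_j}\le 2R$ and $C_Xh_j=2\lambda n^{(j)}+r^{(j)}$, where $n^{(j)}\neq0$ and $\norm{r^{(j)}}\to0$. Pick $k_j$ with $n^{(j)}_{k_j}\neq0$ and set $\tilde h_j=h_j(\cdot+x_{k_j})$. After passing to subsequences, $X-x_{k_j}\to X'$ weakly and $\tilde h_j\rightharpoonup h^\ast$ in $PW_\Omega$. Since $|\tilde h_j(x)-\tilde h_j(y)|\le 2R\norm{k_x-k_y}$, the convergence is locally uniform. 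It follows that:
\begin{itemize}
\item each value $h^\ast(x')$, $x'\in X'$, is a limit of points of the discrete set $2\lambda\Z[i]$, hence lies in it;
\item at most $M^2$ of these values are nonzero, because each $n^{(j)}$ has at most $M^2$ nonzero entries;
\item $|h^\ast(0)|\ge 2\lambda$.
\end{itemize}
This contradicts uniqueness on $X'$ minus finitely many points.

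Drop the proposed ``direct quantitative version''. A bound $\norm{P_{V^\perp}n}^2\ge c\,\#\mathrm{supp}(n)$ with $c$ independent of $n$ is false. Already for $X=\alpha\Z$, Theorem~\ref{thm:main} gives nonzero integer $n$ with $\norm{n-P_{\alpha\Z}n}\to0$; the binomial sequences of Theorem~\ref{thm:two_thirds_binom} are explicit examples. The prolate spectral profile is precisely the tool the paper uses to prove this. Any valid bound of that shape must depend on $M$, and it is then just the claim $c_0>0$ again.
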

The term \emph{uniformly separated} in Theorem \ref{thm:main_bounded} means that $\inf_{x,y \in X, x\not=y}|x-y|>0$. For uniform sampling patterns $X=\alpha \mathbb{Z}$, similar stability guarantees can be proved by following the constructive arguments of \cite{YanEtAl2025,BhandariEtAl2021_reconstruction,BhandariKrahmer2019,RomanovOrdentlich2019}, based on divided differences and linear prediction theory. For general sampling geometries, we shall resort to less constructive arguments --- see Section \ref{sec_tech}. As a corollary
of Theorem \ref{thm:main_bounded}, we obtain an extension of the injectivity theorem of \cite{BhandariKrahmer2019,RomanovOrdentlich2019} to general acquisition geometries.
\begin{corollary}\label{thm:main_inj_X}
Let $\Omega, \lambda>0$, and let $X\subseteq \R$ be a uniformly separated set with $D^-(X) >\Omega$. Then the folded sampling operator \eqref{eq_intro_fso} is injective.
\end{corollary}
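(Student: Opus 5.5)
Corollary \ref{thm:main_inj_X} follows from Theorem \ref{thm:main_bounded} by applying it to a bounded set large enough to contain any two given functions. Let $f,g\in PW_\Omega(\R)$ satisfy $\mathcal{M}_{\lambda,X} f = \mathcal{M}_{\lambda,X} g$; we must show $f=g$. Set $R=\max\{\norm{f}_{L^2(\R)},\norm{g}_{L^2(\R)}\}$ and let $\mathcal{B}$ be the closed ball of radius $R$ in $PW_\Omega(\R)$. This is a bounded set containing both $f$ and $g$.

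Since $X$ is uniformly separated with $D^-(X)>\Omega$, all hypotheses of Theorem \ref{thm:main_bounded} hold for this $\mathcal{B}$. It yields a constant $C=C_{\mathcal{B},\lambda,X}$ with
\begin{equation}
\norm{f-g}_{L^2(\R)} \leq C\,\norm{\mathcal{M}_{\lambda,X} f-\mathcal{M}_{\lambda,X} g}_{\toral} = 0 .
\end{equation}
Here the right-hand side vanishes because the two folded sequences coincide entrywise, so each term $|a_k-b_k|_\lambda$ is zero. Hence $f=g$ in $L^2(\R)$. Since elements of $PW_\Omega(\R)$ are continuous (indeed entire) functions, $f$ and $g$ also agree pointwise. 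This proves injectivity.

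There is no real obstacle here: the only point to check is that the constant may depend on $\mathcal{B}$, and this is harmless because injectivity is a statement about one pair of functions at a time. Note that this argument gives only injectivity, not a global stability bound, which is consistent with Theorem \ref{thm:main}: the constant $C_{\mathcal{B},\lambda,X}$ must blow up as the radius $R$ grows.
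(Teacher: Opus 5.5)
Your argument is correct and is essentially the paper's own (Step 3 in the proof of Theorem \ref{thm:main_bounded}): the paper applies the uniform lower bound of Step 2 directly to $h=f-g$ on the ball of radius $\norm{f-g}_{L^2(\R)}$, while you invoke the resulting Lipschitz estimate on the ball of radius $\max\{\norm{f}_{L^2(\R)},\norm{g}_{L^2(\R)}\}$, which amounts to the same thing. Only your closing aside overreaches: Theorem \ref{thm:main} forces $C_{\mathcal{B},\lambda,X}$ to blow up as the radius grows only for lattices $X=\alpha\Z$, and for general $X$ the paper leaves this open.
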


\subsection{Technical overview}\label{sec_tech}
Our main insight is that the stability of the folded sampling operator is equivalent to a lattice distance problem concerning the range of the non-folded sampling operator $C_X$. To exploit this insight, we bring to bear the spectral properties of \emph{prolate matrices}, which are discrete versions of the time-band limiting operators studied by Landau, Pollak and Slepian \cite{LandauPollak1961,LandauPollak1962,SlepianPollak1961,Slepian1978}. Theorem \ref{thm:main} is then obtained by studying a certain shortest vector problem (SVP). We also present an alternative, more constructive approach 
to Theorem \ref{thm:main} based on \emph{Tschebyschev polynomials with integer coefficients}, which are a variant of the classical orthogonal polynomials with the additional restriction of having integer coefficients. Interestingly, classical Tschebyschev polynomials also play a role in \cite{RomanovOrdentlich2019}. We hope that the two arguments for Theorem \ref{thm:main} provide useful complementary insights. For a special range of $\alpha$ in Theorem \ref{thm:main}, we are able to provide a fully constructive proof. 

As for the proof of Theorem \ref{thm:main_bounded}, we 
elaborate on the insight of \cite{BhandariKrahmer2019}, which compares folded and unfolded sampling by discarding large samples. While in the context of uniform sampling, this is analyzed by directly inspecting the Shannon sampling formula \cite{BhandariKrahmer2019}, in our case, we invoke Beurling's balayage theory \cite{BeurlingCollected1989}.

\subsection{Outline}
In Section \ref{sec:frame_theory}, we recall the necessary tools from frame theory and sampling in Paley-Wiener spaces. In Section \ref{sec:new-crit}, we inspect the continuity of the folded sampling operator and provide 
a criterion for its continuous left-invertibility in terms of a certain distance functional. Section \ref{sec:lattice_tools} analyzes the distance function in the case of lattice sampling geometries and makes the general criterion more concrete. With this preparation, the first proof of Theorem \ref{thm:main} is presented in Section \ref{sec:main_via_SVP}. In Section \ref{sec:main_via_TP}, we provide an additional completely constructive argument for Theorem \ref{thm:main}, valid albeit in the range $\alpha \Omega > {\frac{2}{3}}$. This is used as motivation for a second proof of Theorem \ref{thm:main}.
Section \ref{sec:bounded_case} contains a proof of Theorem \ref{thm:main_bounded}. Section \ref{sec_con} presents a brief conclusion, while a certain technical result about sampling theory is postponed to Section \ref{proof_samp}.

\section{Frame theory and sampling essentials}\label{sec:frame_theory}
We recall the elementary facts of frame theory and refer to Christensen's book \cite{Christensen2016} for proofs and further details. 
For clarity, we introduce the terminology in the context of Hilbert spaces, though we are only interested in the Paley-Wiener space, which is the motivating example \cite{DuffinSchaeffer1952}.

Let $\H$ be a separable Hilbert space. A sequence of elements $\Psi = (\psi_k)_{k\in\Z}\subseteq\H$ is called a frame if there exist constants $0<A\leq B<\infty$ such that 
\begin{equation}\label{eq_frame_1}
    A\norm{f}_\H^2 \leq \norm{\left(\langle f, \psi_k\rangle_\H \right)_{k\in\Z}}_{\ell^2(\Z)}^2 \leq B \norm{f}_\H^2,\quad f\in\H.
\end{equation}
The optimal $A_\Psi$ and $B_\Psi$ in \eqref{eq_frame_1} are called the lower and the upper frame bound, respectively.
Let us fix a frame $\Psi$ with frame bounds $A_\Psi$ and $B_\Psi$. The \emph{analysis operator} of $\Psi$ is the linear mapping
\begin{equation}
    C_\Psi: \H \to \ell^2(\Z), \quad f\mapsto \left(\langle f, \psi_k\rangle_\H \right)_{k\in\Z}.
\end{equation}
This is a bounded linear operator, with operator norm $\norm{C_\Psi}_{\H\to\ell^2(\Z)} = B_\Psi^{1/2}$. Its adjoint is the \emph{synthesis operator} 
\begin{equation}
    C_\Psi^*: \ell^2(\Z) \to \H, \quad c\mapsto \sum\limits_{k\in\Z} c_k \psi_k.
\end{equation}
Their composition $S_\Psi =C_\Psi^*C_\Psi$ is called the \emph{frame operator}. This is a positive definite operator, and its spectrum is contained in $[A_\Psi,B_\Psi]$. 

The reverse composition $G_\Psi = C_\Psi C_\Psi^*$ is called the \emph{Gram matrix}. It is a bounded operator on $\ell^2(\mathbb{Z})$ that we identify with the bi-infinite matrix
\begin{equation}
    (G_\Psi)_{j,k} = \langle \psi_k,\psi_j\rangle_\H, \quad j,k\in\Z.
\end{equation}
We denote with $R_\Psi$ the range of $C_\Psi$.
This is a closed subspace of $\ell^2(\Z)$. 

The analysis operator is not necessarily surjective, but it is injective due to \eqref{eq_frame_1}. It has a bounded (linear) inverse on its range, and satisfies
$\norm{C_\Psi^{-1}}_{R_\Psi\to \H}=A^{-1/2}_\Psi$. 

The orthogonal projection onto $R_\Psi$ is given by
\begin{equation}\label{eq:P_psi_proj}
    P_\Psi = C_\Psi S_\Psi^{-1}C_\Psi^* = C_\Psi (C_\Psi^*C_\Psi)^{-1}C_\Psi^* = G_\Psi G_\Psi^\dagger = G_\Psi^\dagger G_\Psi,
\end{equation}
where $G_\Psi^\dagger$ denotes the Moore-Penrose pseudoinverse of $G_\Psi$. 
The projection onto the orthogonal complement is given by $\id-P_\Psi$.

The Paley-Wiener space $PW_\Omega(\R)$ 
is a closed subspace of $L^2(\R)$ and thus a Hilbert space. It is the prototypical example of a \emph{reproducing kernel Hilbert space}, that is, a Hilbert space of functions such that the pointwise evaluation functionals are continuous. The \emph{reproducing kernel} of $PW_\Omega(\R)$ at the point $x \in \mathbb{R}$ is the function $k_x(y) = \Omega\,\sinc(\Omega(y-x))$, which has the property
\begin{align}
f(x) = \langle f,k_x\rangle, \qquad f \in PW_\Omega(\R).
\end{align}
For $\psi_k(y) = \Omega\,\sinc(\Omega(y-x_k))$, the frame property \eqref{eq_frame_1} is equivalent to the \emph{sampling inequality} 
\begin{equation}\label{eq:sampling_ineq_def}
    A\norm{f}_\H^2 \leq \norm{\left(f(x_k)\right)_{k\in\Z}}_{\ell^2(\Z)}^2 \leq B \norm{f}_\H^2,
    \quad PW_\Omega(\R),
\end{equation}
and we also refer to $C_\Psi$ as the \emph{sampling operator} of the sampling set $X=\{x_k:k\in\mathbb{Z}\}$. The corresponding Gram matrix is
\begin{equation}\label{eq:Gram_kernel_case}
    (G_\Psi)_{j,k} = \langle \psi_k,\psi_j\rangle_\H = \psi_{k}(x_j)
    =\Omega\,\sinc(\Omega(x_j-x_k)),\quad j,k\in\Z.
\end{equation}
When working with frames of reproducing kernels, we replace the index $\Psi$ with the sampling set $X =\{x_k:k\in\Z\}$ 
and use the sampling/frame terminology interchangeably.

\subsection{Sampling in the Paley-Wiener space}
The basic result about sampling bandlimited functions on lattices goes back at least to Whittaker \cite{Whittaker1915}, Shannon \cite{Shannon1948,shannon1949}, and Kotelnikov \cite{Kotelnikov2001}. In the language introduced above, we have that a lattice $\alpha\Z$, with $\alpha>0$, is a sampling set for $PW_\Omega(\mathbb{R})$ if and only if $0<\alpha\leq 1/\Omega$, in which case both frame bounds in \eqref{eq:sampling_ineq_def} equal $1/\alpha$. If $\alpha=1/\Omega$, then the the range of the sampling operator is $R_\Z = \ell^2(\Z)$ and the sampling operator $C_{\Z}$ is an isometric isomorphism between $PW_\Omega(\mathbb{R})$ and $\ell^2(\mathbb{Z})$. If $0<\alpha<1/\Omega$, this is not the case anymore and $R_{\alpha\Z}$ is a non-trivial closed subspace of $\ell^2(\Z)$, which we shall carefully inspect.

With respect to general sampling patterns, the most conclusive results are due to Beurling \cite{BeurlingCollected1989}, 
Duffin-Schaeffer \cite{DuffinSchaeffer1952} and
Landau \cite{Landau1967}, and are usually formulated in terms of the \emph{lower Beurling density} of a set $X=\{x_k: k \in \mathbb{Z}\}$:
\begin{align}\label{eq_bden}
D^{-}(X) = \liminf\limits_{r\to\infty}\inf\limits_{t\in\R} \tfrac{\#X\cap [t-r,t+r]}{2r}.
\end{align}
The density measures the asymptotic average of points per unit area, and, for a lattice, it is $D^-(\alpha\Z) = 1/\alpha$. In order for the count in \eqref{eq_bden} to be more meaningful, one often assumes that $X$ is \emph{uniformly separated}:
\begin{equation}
    \inf\{|x-x'|\,:\,x,x'\in X, \\ x\neq x'\}>0,
\end{equation}
as pairs of sampling locations that are arbitrarily close together add little distinctive information, but do increase \eqref{eq_bden}. Uniform separation also ensures that the sampling operator $C_X:PW_\Omega(\mathbb{R})\to\ell^2(\mathbb{Z})$ is bounded. 

Beurling's sampling theorem states that a uniformly separated set $X$ is a sampling set for $PW_\Omega(\mathbb{R})$ if $D^-(X)>\Omega$ and only if $D^-(X)\geq \Omega$ \cite{BeurlingCollected1989,Landau1967}, while the full characterization of the sampling property is more delicate \cite{MR1923965}. We will need a more precise version of Beurling's sampling theorem that shows which geometric qualities of $X$ impact the corresponding sampling (frame) bounds. The following result is undoubtedly known to experts, but, lacking a precise reference, we provide a proof of it in Section \ref{proof_samp}.

\begin{theorem}[Qualitative version of the sampling theorem]\label{thm:bound_dependency}
Let $\Omega$, $\delta>0$ and let $X=\{x_k:k\in\mathbb{Z}\} \subseteq \R$ be a uniformly separated set with
\begin{equation}
    \inf\limits_{j,k\in\Z, j\neq k}|x_j-x_k|\geq \delta >0.
\end{equation}
Let $\varepsilon>0$ and $r>0$ be such that 
\begin{equation}\label{eq:choice_r}
    \inf\limits_{x\in\R} \tfrac{\#X\cap [t-r,t+r]}{2r} \geq \Omega+\varepsilon.
\end{equation}
Then $X$ is a sampling set for $PW_\Omega(\R)$ and the lower frame bound $A_X$ can be bounded from below by a positive constant $A(\Omega,\delta, \varepsilon, r)>0$ that only depends on $\Omega$, $\delta$, $\varepsilon$, and $r$.
\end{theorem}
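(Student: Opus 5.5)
We will prove Theorem \ref{thm:bound_dependency} by compactness, using Beurling's weak-limit machinery. Fix $\Omega,\delta,\varepsilon,r$ and consider the class $\mathcal{X}=\mathcal{X}(\delta,\varepsilon,r)$ of all sets $X$ with separation at least $\delta$ satisfying \eqref{eq:choice_r}. Each $X\in\mathcal{X}$ has $D^-(X)\geq\Omega+\varepsilon>\Omega$: covering a long interval by disjoint intervals of length $2r$ gives this, up to boundary terms that vanish in the limit. Hence $X$ is a sampling set by Beurling's theorem, which settles the qualitative part. The work is to make the lower bound uniform. Suppose, for contradiction, that $\inf_{X\in\mathcal{X}} A_X=0$. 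Then we have sets $X_n\in\mathcal{X}$ and functions $f_n\in PW_\Omega(\R)$ with $\norm{f_n}_2=1$ and $\sum_k |f_n(x^n_k)|^2\to 0$.

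The first step is to prevent the mass of $f_n$ from escaping to infinity, so that we can extract a nonzero limit. For this we pass to the space $PW^\infty_{\Omega}$ of bounded functions. We use that the $L^2$ frame inequality on a separated set follows from a uniform bound of the form $\sup_x |f(x)|\le K\sup_k|f(x_k)|$ on $PW^\infty_{\Omega'}$ for some $\Omega<\Omega'<\Omega+\varepsilon$. This is Beurling's observation: sampling for $PW^\infty_{\Omega'}$ with $\Omega'>\Omega$ implies sampling for $PW^2_\Omega$, via the standard trick of multiplying $f$ by a translated bandlimited bump $\phi(\cdot-t)$ of small bandwidth and integrating over $t$. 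In that transfer, the $L^2$ constant depends only on $K$, $\delta$, $\Omega$ and $\Omega'$. It therefore suffices to bound the $L^\infty$ constant $K$ uniformly over $\mathcal{X}$.

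To bound $K$ uniformly, assume instead that there are $g_n\in PW^\infty_{\Omega'}$ with $\norm{g_n}_\infty=1$ and $\sup_k|g_n(x^n_k)|\to0$. Translate so that $|g_n(0)|\geq 1/2$, and translate $X_n$ by the same amount; the class $\mathcal{X}$ is translation invariant, so the translated sets stay in $\mathcal{X}$. By Bernstein's inequality and Montgomery's theorem, a subsequence of $g_n$ converges locally uniformly to some $g\in PW^\infty_{\Omega'}$ with $|g(0)|\ge1/2$. Meanwhile, $X_n$ converges weakly (in the Hausdorff sense on compacts) to a set $Y$. The key observation is that both the separation $\geq\delta$ and the counting condition \eqref{eq:choice_r} pass to weak limits. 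Counting is preserved provided we allow a slight enlargement of $r$ to absorb points sitting on interval boundaries; the separation ensures that points do not merge. Moreover $g$ vanishes on $Y$, since $g_n\to g$ uniformly on compacts and $g_n$ is equicontinuous. Hence $Y$ is a uniqueness set for nothing: it is separated with $D^-(Y)\geq\Omega+\varepsilon>\Omega'$, yet it carries the nonzero function $g\in PW^\infty_{\Omega'}$ vanishing on it. This contradicts Beurling's theorem that such $Y$ are uniqueness (indeed sampling) sets for $PW^\infty_{\Omega'}$.

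The main obstacle is the bookkeeping in the two places where constants must be tracked. The first is the passage from the $L^\infty$ to the $L^2$ bound, which must be checked to depend only on $(\Omega,\Omega',\delta,K)$. The second is the stability of \eqref{eq:choice_r} under weak limits. For the latter, we would replace $r$ by $r+1$ and $\varepsilon$ by a slightly smaller $\varepsilon'$ that still exceeds $\Omega'-\Omega$. Choosing $\Omega'=\Omega+\varepsilon/2$ leaves room for both losses.
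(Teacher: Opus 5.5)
Your proposal is correct and follows essentially the same route as the paper: a compactness argument in the Bernstein space $PW^\infty_{\Omega'}$, followed by the localization $f\cdot\hat\eta(\varepsilon(\cdot-x))$ with $\Omega'=\Omega+\varepsilon/2$ to pass to the $L^2$ frame bound; the compactness step translates so that $|g_n(0)|$ stays bounded below, extracts a locally uniform limit $g\neq 0$ and a weak limit set $Y$ that inherits separation and density, and then contradicts Beurling's $L^\infty$ sampling theorem. The one slip is in your bookkeeping: enlarging $r$ to $r+1$ only gives the density bound $\frac{r}{r+1}(\Omega+\varepsilon)$, which need not exceed $\Omega'$ when $r$ is small, but no enlargement is needed at all, because counts on closed intervals are upper semicontinuous under vague convergence of the counting measures, so $\#Y\cap[t-r,t+r]\geq 2r(\Omega+\varepsilon)$ holds directly (alternatively, enlarge by an arbitrarily small amount).
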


\section{Continuity as a distance problem}\label{sec:new-crit} 
\subsection{(Dis-)continuity of the forward mapping}
As a warm-up, we begin with a simple observation.
\begin{lemma}[Discontinuity of the folded sampling operator with the Euclidean metric]\label{lem_disc}
Let $\Omega,\lambda>0$, and let $X=\{x_k:k\in\mathbb{Z}\}\subseteq\R$ be a uniformly separated set. Then the map
\begin{equation}
    \mathcal{M}_{\lambda,X}: PW_{\Omega}(\R)\to\ell^2(\Z), \quad f\mapsto \big( \mathcal{M}_\lambda f(x_k)\big)_{k\in \Z}
\end{equation}
is discontinuous.
\end{lemma}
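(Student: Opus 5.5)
The idea is to exhibit a function $f$ with a sample sitting exactly at a jump of the folding map, and perturb it by a small multiple of itself. The only discontinuities of $\mathcal{M}_\lambda$ on $\R$ are at the points $\lambda+2\lambda n$, $n\in\Z$. At $x=\lambda$ we have $\mathcal{M}_\lambda(\lambda)=-\lambda$, while $\mathcal{M}_\lambda(\lambda-\eta)=\lambda-\eta$ for small $\eta>0$. So a sample value at level $\lambda$ jumps by $2\lambda$ under arbitrarily small perturbations.

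First I choose $f\in PW_\Omega(\R)$, real-valued, with $f(x_0)=\lambda$ at one sampling point $x_0\in X$. Since $k_{x_0}(x_0)=\Omega>0$, I can take
\[
f=\tfrac{\lambda}{\Omega}k_{x_0}.
\]
Then I set $f_t=(1-t)f$ for $t\in(0,1)$. Clearly $\norm{f_t-f}_{L^2(\R)}=t\norm{f}\to0$ as $t\to0$.

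Next I compare the folded samples coordinatewise. At the point $x_0$,
\[
\mathcal{M}_\lambda f_t(x_0)=\lambda(1-t)\quad\text{but}\quad \mathcal{M}_\lambda f(x_0)=-\lambda,
\]
so that coordinate alone contributes a difference of $2\lambda-\lambda t$. Since every other coordinate contributes a nonnegative amount, this gives
\[
\norm{\mathcal{M}_{\lambda,X}f_t-\mathcal{M}_{\lambda,X}f}_{\ell^2(\Z)}\ge \lambda(2-t)\ge\lambda.
\]
This does not tend to zero, so the map is discontinuous at $f$.

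One well-definedness issue needs checking: the map must actually land in $\ell^2(\Z)$. I verify that $|\mathcal{M}_\lambda(x)|\le|x|$ whenever $|x|<\lambda$, and that $|\mathcal{M}_\lambda(x)|\le\lambda\le|x|$ otherwise. Hence $|\mathcal{M}_\lambda(x)|\le|x|$ for all real $x$ (apply this componentwise for complex values). Combined with the boundedness of $C_X$, which holds by uniform separation, this places $\mathcal{M}_{\lambda,X}f$ in $\ell^2(\Z)$.

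There is no real obstacle. The only subtle point is the convention that the folding range is $[-\lambda,\lambda)$, which forces $\mathcal{M}_\lambda(\lambda)=-\lambda$. Alternatively, I could approach from above with $(1+t)f$ to avoid relying on that convention: then $\mathcal{M}_\lambda((1+t)\lambda)=-\lambda+\lambda t$ while $\mathcal{M}_\lambda((1-t)\lambda)=\lambda-\lambda t$. This shows the map cannot be continuous at $f$ under either convention.
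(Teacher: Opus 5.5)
Your proof is correct and is essentially the paper's argument. The paper also takes $f=\lambda\,\sinc(\Omega(\cdot-x_0))$, with peak value $\lambda$ at a sampling point, and compares $\gamma f$ with $\gamma^{-1}f$ as $\gamma\nearrow 1$; this is your convention-independent $(1\pm t)f$ variant, whereas your main version compares $(1-t)f$ directly with $f$ using $\mathcal{M}_\lambda(\lambda)=-\lambda$, and your check that $\mathcal{M}_{\lambda,X}$ maps into $\ell^2(\Z)$ (via $|\mathcal{M}_\lambda(x)|\le|x|$) is a detail the paper leaves implicit.
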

\begin{proof}
Let $f(x) = \lambda \tfrac{\sin(\pi \Omega (x-x_0))}{\pi \Omega (x-x_0)}$
and note that, for $1/2<\gamma<1$,
\begin{align}
\mathcal{M}_\lambda (\gamma f) = \gamma f \quad \text{and}\quad
\mathcal{M}_\lambda (\tfrac{1}{\gamma}f)(x_0) = \tfrac{f(x_0)}{\gamma}-2\lambda.
\end{align}
Thus 
\begin{equation}       \lim\limits_{\gamma\nearrow 1} \norm{(\mathcal{M}_{\lambda,X}(\gamma f) - \mathcal{M}_{\lambda, X}(\tfrac{1}{\gamma}f)) }_{\ell^2(\Z)}
     \geq \lim\limits_{\gamma\nearrow 1} |\gamma f(x_0) - \tfrac{1}{\gamma}f(x_0)+2\lambda| =2\lambda> 0, 
\end{equation}
while $
    \lim\limits_{\gamma\nearrow 1} \norm{\gamma f - \tfrac{1}{\gamma}f }_{L^2(\R)} =0$; see Figure \ref{fig:forward_unstable}.
\end{proof}
\begin{figure}[h]\includegraphics[width=.5\textwidth]{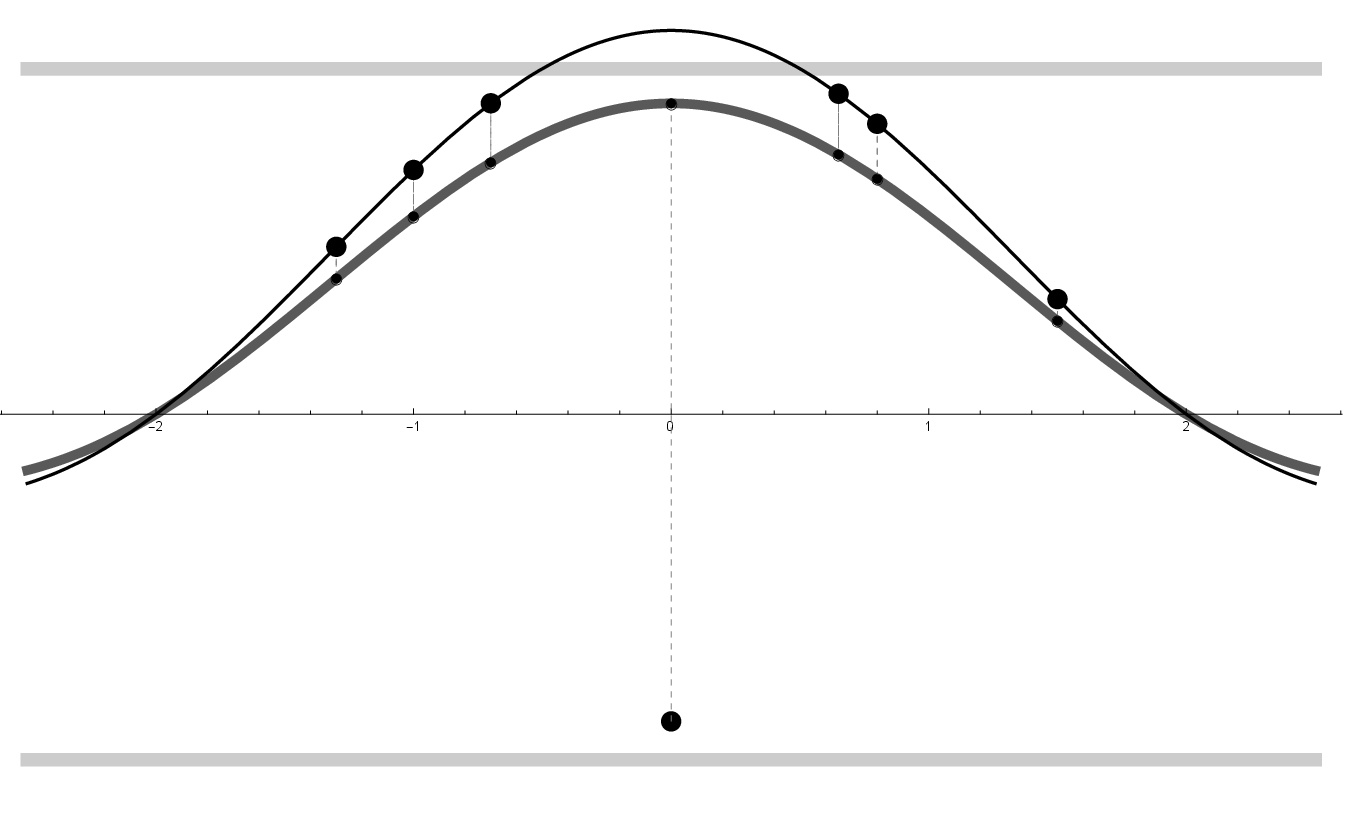}
	\caption{The functions $\frac{f}{\gamma}$ (black) and $\gamma f$ (gray) from Lemma \ref{lem_disc} sampled around $x_0$.}\label{fig:forward_unstable}
\end{figure} 
As explained in Section \ref{sec_res}, the toral metric \eqref{eq_toral_metric} is better suited to the analysis of the folding operation. Let us make a few useful observations.

To shorten the notation, we write $\Z[i] = \Z+i\Z$ for the Gaussian integers.
\begin{lemma}\label{lem:toral_translation_inv}
For all $\lambda>0$ the toral metric $(z,w) \mapsto |z-w|_\lambda$ is translation invariant, with
     \begin{equation}\label{eq_b3}
        |z-w|_\lambda =|\mathcal{M}_\lambda(z) - \mathcal{M}_{\lambda}(w)|_\lambda =|\mathcal{M}_\lambda(z-w)|_\lambda =|\mathcal{M}_\lambda(z-w)|,\quad z,w\in\C.
     \end{equation}
     In particular, for all sequences $a,b\in\ell^2(\Z)$:      \begin{equation}\label{eq:toral_transion_inv_norm}
         \left\lVert \mathcal{M}_\lambda(a) - \mathcal{M}_{\lambda}(b) \right\rVert_\toral = \left\lVert \mathcal{M}_\lambda(a-b) \right\rVert_{\toral} = \left\lVert \mathcal{M}_\lambda(a-b) \right\rVert_{\ell^2(\Z)}.
     \end{equation}
Secondly, for all $z,w \in \mathbb{C}$:
\begin{align}\label{eq_three}
\text{if}\quad|\mathcal{M}_\lambda(z)-\mathcal{M}_\lambda(w)|<\lambda,
\quad\mbox{then}\quad \mathcal{M}_\lambda(z)-\mathcal{M}_\lambda(w)=\mathcal{M}_\lambda(z-w).
\end{align}
\end{lemma}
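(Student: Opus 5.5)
We work coordinatewise, using that the toral metric is a pseudometric determined by the distance to the lattice $2\lambda\Z[i]$. Write $z-w=u$. Since $\mathcal{M}_\lambda(x)-x\in 2\lambda\Z$ for real $x$ by \eqref{eq_intro_mod}, the complex folding satisfies $\mathcal{M}_\lambda(z)-z\in 2\lambda\Z[i]$ for every $z\in\C$. Translation invariance $|z+c-(w+c)|_\lambda=|z-w|_\lambda$ is immediate from the definition \eqref{eq_toral_metric}, since the infimum only involves $z-w$. Moreover, for any lattice element $m\in2\lambda\Z[i]$ we have $|u+m|_\lambda=|u|_\lambda$, because $n\mapsto n-m/(2\lambda)$ is a bijection of $\Z[i]$. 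Hence $|z-w|_\lambda=|\mathcal{M}_\lambda(z)-\mathcal{M}_\lambda(w)|_\lambda$, since the two differences differ by a lattice element. The same argument gives $|z-w|_\lambda=|\mathcal{M}_\lambda(z-w)|_\lambda$.

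For the last equality in \eqref{eq_b3}, write $v=\mathcal{M}_\lambda(z-w)=a+ib$ with $a,b\in[-\lambda,\lambda)$. Then
\[
|v|_\lambda^2=\inf_{n_1,n_2\in\Z}\big((a-2\lambda n_1)^2+(b-2\lambda n_2)^2\big),
\]
and the infimum separates into the two real coordinates. For real $a\in[-\lambda,\lambda)$ we have $|a-2\lambda n|\ge 2\lambda|n|-|a|\ge |a|$ whenever $n\neq0$, since $|a|\le\lambda$. So $n=0$ is optimal in each coordinate and $|v|_\lambda=|v|$.

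The sequence identity \eqref{eq:toral_transion_inv_norm} follows by applying \eqref{eq_b3} to $z=a_k$, $w=b_k$ and summing squares. Here $\norm{\cdot}_\toral$ is understood via the summed toral distances, and $\mathcal{M}_\lambda(a-b)\in\ell^2$ because $|\mathcal{M}_\lambda(x)|\le|x|$ holds coordinatewise. The last bound needs a small check, since $\mathcal{M}_\lambda$ could map $x$ close to $\lambda$ to a value near $-\lambda$. Still $|\mathcal{M}_\lambda x|\le|x|$ for real $x$: if $|x|<\lambda$ the folding is the identity, except at $x=\lambda$ where $|{-\lambda}|=\lambda$; otherwise $|\mathcal{M}_\lambda x|\le\lambda\le|x|$.

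For \eqref{eq_three}, set $d=\mathcal{M}_\lambda(z)-\mathcal{M}_\lambda(w)$. Then $d-(z-w)\in 2\lambda\Z[i]$, so $\mathcal{M}_\lambda(d)=\mathcal{M}_\lambda(z-w)$ by uniqueness in \eqref{eq_intro_mod} applied componentwise. The hypothesis $|d|<\lambda$ forces both real components of $d$ to lie in $(-\lambda,\lambda)\subseteq[-\lambda,\lambda)$, so $\mathcal{M}_\lambda(d)=d$, which gives the claim.

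The only delicate point is the half-open convention at the boundary $\pm\lambda$, and it is handled by the observations above. No step is a real obstacle.
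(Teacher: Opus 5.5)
Your proof is correct and follows the paper's argument. Both reduce everything to the congruence of $z-w$, $\mathcal{M}_\lambda(z)-\mathcal{M}_\lambda(w)$ and $\mathcal{M}_\lambda(z-w)$ modulo $2\lambda\Z[i]$, and get the final implication from uniqueness of the representative in $[-\lambda,\lambda)+i[-\lambda,\lambda)$; you additionally check explicitly that $|v|_\lambda=|v|$ on the fundamental domain (via $|a-2\lambda n|\ge|a|$ for $n\neq 0$) and that $\mathcal{M}_\lambda(a-b)\in\ell^2(\Z)$, details the paper leaves implicit.
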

 \begin{proof}
 The translation invariance is obvious. Furthermore, for all $m\in\Z[i]$:
     \begin{equation}
         |z+2\lambda m|_\lambda = \inf\limits_{n\in\Z[i]}|z+2\lambda m+2\lambda n| = \inf\limits_{n\in\Z[i]}|z+2\lambda n| = |z|_\lambda = |\mathcal{M}_\lambda (z)|.
     \end{equation}
     As $z-w$, $\mathcal{M}_\lambda (z-w)$, $\mathcal{M}_\lambda(z) - \mathcal{M}_\lambda(w)$ are all congruent modulo $2\lambda\mathbb{Z}[i]$, \eqref{eq_b3} follows.

     Similarly, if $|\mathcal{M}_\lambda(z)-\mathcal{M}_\lambda(w)|<\lambda$, then $\mathcal{M}_\lambda(z)-\mathcal{M}_\lambda(w) \in (-\lambda,\lambda)+i(-\lambda,\lambda)$. Since $z-w$ and $\mathcal{M}_\lambda(z)-\mathcal{M}_\lambda(w)$ are congruent modulo $2\lambda\mathbb{Z}[i]$, it follows that $\mathcal{M}_\lambda(z-w)=\mathcal{M}_\lambda(z)-\mathcal{M}_\lambda(w)$.
 \end{proof}
We are now in a position to show the continuity of the folded sampling operator with respect to the toral metric.
\begin{lemma}\label{lem_fol_cont}
Let $\Omega, \lambda>0$, and let $X=\{x_k:k\in\mathbb{Z}\}\subseteq \R$ be a uniformly separated set. Then the folded sampling operator
\begin{equation}
    \mathcal{M}_{\lambda,X}: PW_\Omega(\mathbb{R}) \to \ell_\lambda^2(\mathbb{Z}), \qquad
f \mapsto \mathcal{M}_{\lambda,X} f = (\mathcal{M}_\lambda f(x_k))_{k\in\Z}
  \end{equation}
is Lipschitz continuous.
\end{lemma}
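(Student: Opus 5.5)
The plan is to reduce the statement to the boundedness of the linear sampling operator $C_X$, using the translation invariance recorded in Lemma \ref{lem:toral_translation_inv}. Fix $f,g\in PW_\Omega(\R)$. By \eqref{eq:toral_transion_inv_norm} applied to the sequences $a=C_Xf$ and $b=C_Xg$, we have
\begin{equation}
\norm{\mathcal{M}_{\lambda,X} f-\mathcal{M}_{\lambda,X} g}_{\toral}
=\norm{\mathcal{M}_\lambda\big(C_Xf-C_Xg\big)}_{\ell^2(\Z)}
=\norm{\mathcal{M}_\lambda\big(C_X(f-g)\big)}_{\ell^2(\Z)},
\end{equation}
where the last step uses the linearity of $C_X$.

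Next, I would use the pointwise bound $|\mathcal{M}_\lambda(z)|=|z|_\lambda\le |z|$, which holds by taking $n=0$ in the infimum \eqref{eq_toral_metric} together with \eqref{eq_b3}. Applying it coordinatewise gives
\begin{equation}
\norm{\mathcal{M}_\lambda\big(C_X(f-g)\big)}_{\ell^2(\Z)}\le \norm{C_X(f-g)}_{\ell^2(\Z)}.
\end{equation}

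Finally, uniform separation of $X$ ensures that $C_X:PW_\Omega(\R)\to\ell^2(\Z)$ is bounded, as noted in Section \ref{sec:frame_theory}. This is the upper sampling inequality \eqref{eq:sampling_ineq_def}, and it holds with some constant $B$ even when $X$ is not a sampling set. Hence
\begin{equation}
\norm{\mathcal{M}_{\lambda,X} f-\mathcal{M}_{\lambda,X} g}_{\toral}\le B^{1/2}\norm{f-g}_{L^2(\R)},
\end{equation}
which is the claimed Lipschitz continuity.

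The only step needing any justification is this upper (Bessel) bound for separated sets. If a self-contained argument were wanted, I would use the standard Plancherel–Pólya estimate: $|f(x_k)|^2$ is bounded by a constant times $\int_{x_k-\delta/2}^{x_k+\delta/2}|f|^2$ for $f\in PW_\Omega(\R)$, which follows from the subharmonicity of $|f|^2$ in a strip or from Bernstein's inequality. These intervals are disjoint by separation, so summing over $k$ gives the bound. Since the paper treats boundedness as known, I would simply cite it.
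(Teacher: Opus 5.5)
Your proof is correct and is the paper's argument: translation invariance from Lemma \ref{lem:toral_translation_inv}, the coordinatewise bound $|\mathcal{M}_\lambda(z)|=|z|_\lambda\le|z|$, and boundedness of $C_X$ for uniformly separated $X$. One caveat about your optional aside: the local bound $|f(x_k)|^2\le C\int_{x_k-\delta/2}^{x_k+\delta/2}|f|^2$ is false on $PW_\Omega(\R)$ (e.g.\ take $f(t)=\sinc(\tfrac{\Omega t}{N+1})^{N+1}p_N(t)$ with $\deg p_N=N$, $p_N(0)=1$ and $\int_{-\delta/2}^{\delta/2}|p_N|^2\to0$), so a self-contained Plancherel--P\'olya argument should instead bound $|f(x_k)|^2$ by $C\int_{x_k-\delta/2}^{x_k+\delta/2}(|f|^2+|f'|^2)$ and then apply Bernstein's inequality $\norm{f'}_2\le\pi\Omega\norm{f}_2$, or bound it by a disk average of $|f|^2$ in $\C$ (subharmonicity) and then use $\int_\R|f(t+iy)|^2\,dt\le e^{2\pi\Omega|y|}\norm{f}_2^2$.
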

\begin{proof}
Using Lemma \ref{lem:toral_translation_inv} we have, 
\begin{equation} 
  \begin{split}
        \phantom{\leq}\ \,\norm{\mathcal{M}_{\lambda,X}f-\mathcal{M}_{\lambda,X} g}_{\toral}
        &=\norm{\mathcal{M}_{\lambda,X}(f-g)}_{\toral}  
        \leq \norm{C_X(f-g)}_{\ell^2(\Z)}     \\ & \leq \norm{C_X}_{L^2(\R)\to \ell^2(\Z)}\norm{f-g}_{L^2(\R)},
  \end{split}
\end{equation}
while $\norm{C_X}_{L^2(\R)\to \ell^2(\Z)}<\infty$ because $X$ is uniformly separated.
\end{proof}

\subsection{Euclidean vs toral stability}
We now explore the interplay between the left-invertibility of the folded  sampling operator $\mathcal{M}_{\lambda,X}$, when its range 
\begin{align}
\foldedRange :=
\mathcal{M}_{\lambda,X} ( PW_\Omega(\mathbb{R}) )
\end{align}
is considered either with the metric inherited from $\ell^2(\mathbb{Z})$
(Euclidean) or $\toral$ (toral). 
 
\begin{lemma}[Reformulations of continuity]\label{lem:continuity_statements} 
    Let $\lambda>0$, and let $X = \{x_k\}_{k\in\Z}\subseteq\R$ be uniformly separated set. Assume that 
    there exists a map $$\mathcal{T}_{\lambda,X}: \foldedRange \to PW_{\Omega}(\R)$$
    that is a left-inverse of $\mathcal{M}_{\lambda,X}$, i.e., $\mathcal{T}_{\lambda,X} \mathcal{M}_{\lambda,X} = \id_{PW}$. Then the following hold.
    \begin{enumerate}[(i)]
        \item $\mathcal{T}_{\lambda,X}$ is uniformly continuous with respect to the Euclidean metric if and only if it is continuous with respect to the Euclidean metric at the zero sequence.
    \item $\mathcal{T}_{\lambda,X}$ is uniformly continuous with respect to the toral metric if and only if it is continuous with respect to the toral metric at some sequence $\mathcal{M}_{\lambda,X}h_0$.
    \item If $\mathcal{T}_{\lambda,X}$ is continuous {with respect to the toral metric} at a  sequence $\mathcal{M}_{\lambda,X}h_0$, then it is also continuous with respect to the Euclidean metric at the same sequence $\mathcal{M}_{\lambda,X}h_0$. 
    \end{enumerate}
\end{lemma}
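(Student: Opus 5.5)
The three parts share one mechanism: translation invariance of the folding map combined with linearity of $PW_\Omega(\R)$. The identity that does all the work is that, for $f,h\in PW_\Omega(\R)$, Lemma \ref{lem:toral_translation_inv} gives
\begin{equation}
\norm{\mathcal{M}_{\lambda,X}f-\mathcal{M}_{\lambda,X}h}_{\toral}=\norm{\mathcal{M}_{\lambda,X}(f-h)}_{\ell^2(\Z)}=\norm{\mathcal{M}_{\lambda,X}(f-h)-\mathcal{M}_{\lambda,X}0}_{\toral},
\end{equation}
and, since $\mathcal{T}_{\lambda,X}$ is a left inverse, $\norm{\mathcal{T}_{\lambda,X}\mathcal{M}_{\lambda,X}f-\mathcal{T}_{\lambda,X}\mathcal{M}_{\lambda,X}h}=\norm{f-h}$. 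Because every point of $\foldedRange$ has the form $\mathcal{M}_{\lambda,X}f$, continuity of $\mathcal{T}_{\lambda,X}$ at $\mathcal{M}_{\lambda,X}h_0$ in the toral metric is the statement: for every $\varepsilon>0$ there is $\delta>0$ such that $\norm{\mathcal{M}_{\lambda,X}(f-h_0)}_{\ell^2}<\delta$ implies $\norm{f-h_0}<\varepsilon$. This depends only on the difference $u=f-h_0$, which ranges over all of $PW_\Omega(\R)$. For (ii), I would substitute $u=f-g$ for arbitrary $f,g$ and obtain uniform continuity with the same $\delta$. The converse is trivial.

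For (i), the Euclidean metric is not translation invariant under folding, so I would argue differently. Continuity at $0=\mathcal{M}_{\lambda,X}0$ means that $\norm{\mathcal{M}_{\lambda,X}u}_{\ell^2}<\delta$ implies $\norm{u}<\varepsilon$. Given $f,g$ with $\norm{\mathcal{M}_{\lambda,X}f-\mathcal{M}_{\lambda,X}g}_{\ell^2}<\delta'$, where $\delta'\le\min(\delta,\lambda)$, each coordinate difference has modulus $<\lambda$. By \eqref{eq_three} we then have $\mathcal{M}_\lambda(f(x_k))-\mathcal{M}_\lambda(g(x_k))=\mathcal{M}_\lambda((f-g)(x_k))$, so $\mathcal{M}_{\lambda,X}(f-g)=\mathcal{M}_{\lambda,X}f-\mathcal{M}_{\lambda,X}g$ coordinatewise. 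Its Euclidean norm is therefore $<\delta$, which gives $\norm{f-g}<\varepsilon$. The converse is again trivial.

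For (iii), I would use the pointwise comparison $|z|_\lambda\le|z|$, which gives $\norm{a-b}_{\toral}\le\norm{a-b}_{\ell^2}$. Hence Euclidean closeness to $\mathcal{M}_{\lambda,X}h_0$ implies toral closeness, and toral continuity at that point yields Euclidean continuity there.

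The only delicate point is in (i): I have to confirm that the hypothesis of \eqref{eq_three} holds. The Euclidean $\ell^2$ bound controls each coordinate, since $|a_k-b_k|\le\norm{a-b}_{\ell^2}<\lambda$. This is also the reason (i) is stated only at the zero sequence rather than at an arbitrary point.
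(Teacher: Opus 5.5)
Your proof is correct and follows the paper's argument essentially step for step. For (ii) you use the translation invariance of Lemma \ref{lem:toral_translation_inv} to move the toral neighbourhood of $\mathcal{M}_{\lambda,X}h_0$ onto the difference $f-g$. For (i) you combine the coordinatewise bound $|a_k-b_k|\le\norm{a-b}_{\ell^2}<\lambda$ with \eqref{eq_three} (you shrink to $\delta'\le\min(\delta,\lambda)$, where the paper takes $\delta<\lambda$ from the start). For (iii) you use the comparison $|z-w|_\lambda\le|z-w|$.
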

\begin{proof}
\textbf{(i) Continuity w.r.t. Euclidean metric.} \label{lem:cont_euclidean} 
Suppose that $\mathcal{T}_{\lambda,X}$ is continuous at $0$ with the Euclidean metric and let $\varepsilon>0$. Then there exists $0<\delta<\lambda$ such that, for $h \in PW_\Omega(\mathbb{R})$,
\begin{equation}
    \norm{\mathcal{M}_{\lambda,X} h}_{\ell^2(\Z)}<\delta 
    \text{\quad implies \quad}
    \norm{h}_{L^2(\R)}<\varepsilon.
    \end{equation}
Let $f,g\in PW_{\Omega}(\R)$ such that 
     \begin{equation}
        \norm{\mathcal{M}_{\lambda,X} f - \mathcal{M}_{\lambda,X} g}_{\ell^2(\Z)}<
        \delta.
    \end{equation}
    Since $\norm{\mathcal{M}_{\lambda,X} f - \mathcal{M}_{\lambda,X} g}_{\ell^\infty(\Z)} \leq \norm{\mathcal{M}_{\lambda,X} f - \mathcal{M}_{\lambda,X} g}_{\ell^2(\Z)}$, by
    \eqref{eq_three} with $z = f(x_k), w=g(x_k)$, $k\in\Z$, the function $ h=f-g$ satisfies
    \begin{equation}
        \norm{\mathcal{M}_{\lambda,X} h}_{\ell^2(\Z)} = \norm{\mathcal{M}_{\lambda,X} f - \mathcal{M}_{\lambda,X} g}_{\ell^2(\Z)}<\delta.
    \end{equation}
    Thus, $\norm{f-g}_{L^2(\R)} = \norm{h}_{L^2(\R)} 
    <\varepsilon$, as desired.

    \medskip

    \noindent \textbf{(ii) Continuity w.r.t. toral metric.} \label{lem:cont_everywhere_toral} 
    Suppose that $\mathcal{T}_{\lambda,X}$ is continuous at $\mathcal{M}_{\lambda,X}h_0$ for some
    $h_0 \in PW_\Omega(\mathbb{R})$ and let $\varepsilon>0$. Then there exists $\delta>0$ such that     \begin{equation}\label{eq:toral_h_0_def}
        \norm{\mathcal{M}_{\lambda,X} h - \mathcal{M}_{\lambda,X} h_0}_{\toral}<\delta 
        \text{\quad implies \quad}
        \norm{h-h_0}_{L^2(\R)}<\varepsilon
        \end{equation}
        for $h\in PW_\Omega(\R)$.
        Let $f,g\in PW_\Omega(\R)$. By Lemma \ref{lem:toral_translation_inv},
        \begin{equation}
            \norm{\mathcal{M}_{\lambda,X} f - \mathcal{M}_{\lambda,X} g}_{\toral} = \norm{\mathcal{M}_{\lambda,X} (f-g)}_{\toral} = \norm{\mathcal{M}_{\lambda,X} (f-g+h_0) - \mathcal{M}_{\lambda,X} h_0}_{\toral}.
        \end{equation}
        Thus, if $          \norm{\mathcal{M}_{\lambda,X} f - \mathcal{M}_{\lambda,X} g}_{\toral}<\delta$,
        then, by \eqref{eq:toral_h_0_def},
        \begin{equation}
        \norm{f-g}_{L^2(\R)} = \norm{(f-g+h_0)-h_0}_{L^2(\R)} <\varepsilon.
        \end{equation}

 \noindent\textbf{(iii) Comparison of topologies.} Since $|z-w|_\lambda \leq |z-w|$ for all $z,w\in\C$, the toral metric on $\foldedRange$ defines a coarser topology than the one induced by the Euclidean metric, and the claim follows.
\end{proof}

\subsection{The distance criterion}
We now come to one of our main insights: a criterion for the continuous left invertibility of the folded sampling map in terms of a certain integer distance problem. Let us introduce some notation.
\begin{definition} 
    Let $X\subseteq\R$ be a sampling set for $PW_1(\R)$. We denote with $\ltZZi$ and $\ltZZ$ the square-summable 
    (in fact, finitely supported) bi-infinite sequences with entries in $\Z[i] = \Z+i\Z$ and $\Z$ respectively: 
\begin{align*}
    \ltZZi &=\{ n\in\ell^2(\Z):\  n_k\in \Z[i],\, k\in\Z\},\\
           \ltZZ &=\{ n\in\ell^2(\Z):\  n_k\in \Z,\, k\in\Z\}.
\end{align*}
Recall that $R_X$ is the range of the sampling operator $C_X$, and $\id -P_X$ is the projection onto its orthogonal complement. 
We denote the minimal distance between $R_X$ and $\ltZZi$ by
\begin{equation}
     \Delta_X\coloneqq \inf_{\substack{n\in  \ltZZi,\\ n\neq 0} } \norm{n-P_X n}_{\ell^2(\Z)}
\end{equation}
and the minimal distance between $R_X$ and $\ltZZ$ by
\begin{equation}
     \Delta_{X,\R}\coloneqq \inf_{\substack{n\in  \ltZZ,\\ n\neq 0} } \norm{n-P_X n}_{\ell^2(\Z)}.
\end{equation}
\end{definition}
Our main insight into the continuity problem is the following proposition. 
\begin{proposition}\label{prop:inf_crit}
    Let $X = \{x_k\}_{k\in\Z}\subseteq\R$ be a sampling set for $PW_1(\mathbb{R})$. Let $\mathcal{T}_{1,X}: \foldedRange[1] \to PW_{1}(\R)$ be a left-inverse of $\mathcal{M}_{1,X}$, that is, $\mathcal{T}_{1,X} \mathcal{M}_{1,X} = \id_{PW}$. Then $\mathcal{T}_{1,X}$
    is (uniformly) continuous with respect to the Euclidean metric if and only if $\Delta_X>0$.
    If we restrict $\mathcal{M}_{1,X} $ to
    \begin{equation}
        PW_1(\R,\R) = \{ f\in PW_1(\R): f \text{ real-valued a.e.}\},
    \end{equation}
    then the continuity of the left inverse is equivalent to $\Delta_{X,\R}>0$.
\end{proposition}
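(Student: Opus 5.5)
By Lemma \ref{lem:continuity_statements}(i), uniform continuity with respect to the Euclidean metric is equivalent to continuity at the zero sequence. So the plan is to show that continuity at $0$ holds if and only if $\Delta_X>0$. The key observation is that for $h\in PW_1(\R)$ with $\lambda=1$, we can write $C_X h = \mathcal{M}_{1,X}h + 2n$ with $n\in\ltZZi$. Here $n$ is finitely supported, since $C_Xh\in\ell^2$ forces $|h(x_k)|<1$ for all but finitely many $k$. Hence
\begin{equation}
\norm{\mathcal{M}_{1,X}h}_{\ell^2} = \norm{C_Xh - 2n}_{\ell^2} \ge \dist(2n, R_X) = 2\norm{n-P_Xn}_{\ell^2}.
\end{equation}
Conversely, every $n\in\ltZZi$ produces such a decomposition, as shown below.

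\textbf{Sufficiency ($\Delta_X>0$ implies continuity at $0$).} Let $h$ satisfy $\norm{\mathcal{M}_{1,X}h}_{\ell^2}<\delta$ with $\delta<2\Delta_X$. The inequality above forces $n=0$, so $\mathcal{M}_{1,X}h=C_Xh$. The frame inequality then gives $\norm{h}_{L^2}\le A_X^{-1/2}\norm{C_Xh}_{\ell^2}<A_X^{-1/2}\delta$, which is continuity at $0$, in fact with a linear modulus.

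\textbf{Necessity (failure of continuity when $\Delta_X=0$).} Pick nonzero $n^{(j)}\in\ltZZi$ with $\norm{n^{(j)}-P_Xn^{(j)}}\to0$, and set $h_j = 2C_X^{-1}P_Xn^{(j)}\in PW_1(\R)$, so that $C_Xh_j = 2P_Xn^{(j)}$. Then $C_Xh_j - 2n^{(j)}$ has small $\ell^2$ norm, hence small $\ell^\infty$ norm. For $j$ large its entries lie in $(-1,1)+i(-1,1)$, so $\mathcal{M}_{1,X}h_j = C_Xh_j-2n^{(j)}$ and $\norm{\mathcal{M}_{1,X}h_j}_{\ell^2}\to0$. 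On the other hand, since $n^{(j)}\neq0$ has some entry with modulus at least $1$,
\begin{equation}
\norm{h_j}_{L^2}\ge B_X^{-1/2}\norm{2P_Xn^{(j)}}_{\ell^2}\ge B_X^{-1/2}\bigl(2\norm{n^{(j)}}_{\ell^2}-2\norm{n^{(j)}-P_Xn^{(j)}}_{\ell^2}\bigr)\ge B_X^{-1/2}(2-o(1)).
\end{equation}
Since $\mathcal{T}_{1,X}\mathcal{M}_{1,X}h_j=h_j$ and $\mathcal{T}_{1,X}(0)=0$, the left inverse is not continuous at $0$.

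\textbf{Real-valued case.} The same argument goes through with $\ltZZ$ in place of $\ltZZi$. The one point to check is that when $n^{(j)}$ is real, $h_j$ is real-valued. This holds because $C_X$ commutes with complex conjugation ($\overline{h}\in PW_1$ has samples $\overline{h(x_k)}$), so $R_X$ and $P_X$ are conjugation-invariant, and $C_X^{-1}$ of a real sequence in $R_X$ is real-valued.

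The main subtlety is justifying that the integer part $n$ is finitely supported and that $\mathcal{M}_1$ of a sequence with small $\ell^\infty$ entries is the identity. The boundary convention $[-1,1)$ is harmless here, because the entries in question are strictly inside $(-1,1)+i(-1,1)$.
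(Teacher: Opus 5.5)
Your proof is correct and follows essentially the same route as the paper. Sufficiency is identical: the bound $\norm{C_Xh-2n}_{\ell^2(\Z)}\geq 2\norm{n-P_Xn}_{\ell^2(\Z)}$ forces $n=0$, and the lower frame bound finishes. Your necessity argument with $h_j=2C_X^{-1}P_Xn^{(j)}$ is a direct version of the paper's contradiction argument via the upper frame bound, and your conjugation-invariance argument for the real-valued case makes explicit what the paper only sketches through $\re(\mathcal{M}_\lambda f)=\mathcal{M}_\lambda \re(f)$.
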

\begin{proof}
With the notation in Section \ref{sec:frame_theory}, note that
\begin{equation}\label{eq_yyy2} 
    \dist(c,R_X) = \norm{c - P_X c}_{\ell^2(\Z)},\quad c\in\ell^2(\Z). 
\end{equation}
    By Lemma \ref{lem:continuity_statements} (i), we only need to consider continuity at zero. 

\smallskip

\noindent\textbf{Step 1. Continuity implies a positive distance.}
We prove this by contradiction.
Assume that $\mathcal{T}_{1,X}$ is continuous at $0$, but $\Delta_X=0$. Taking \eqref{eq_yyy2} into account, we can find a sequence $(n_N)_{N\in\N} \subset \ltZZi \setminus \{0\}$ and functions $(f_N)_{N\in\Z}\subset PW_{1}(\R)$ such that 
\begin{equation}\label{eq_yyy1}
    \norm{Cf_N-n_N}_{\ell^2(\Z)}<\tfrac{1}{2N},\quad N\in\N.
\end{equation}
In particular, 
\begin{equation}
    |(C (2 f_N))_k - 2(n_N)_{k} |<\tfrac{1}{N}\leq 1,\quad N\in\N, k\in\Z.
\end{equation}
Hence,
\begin{equation}
    \mathcal{M}_{1, X} (2 f_N) = C (2 f_N)-2 n_N = 2(C f_N- n_N ),\quad N\in\N,
\end{equation}
and 
\begin{equation}\label{eq:M_f_l_bound}
    \norm{\mathcal{M}_{1, X} (2 f_N)}_{\ell^2(\Z)} <\tfrac{1}{N},\quad N\in\N.
\end{equation}
Let us now consider this in terms of the continuity of the inverse mapping. Let $B_X$ denote the upper frame bound of $X$.
    Since the left-inverse map $\mathcal{T}_{1,X}$ is continuous at zero, there exists $\delta>0$ such that for all $h\in PW_1(\R)$
    \begin{equation}
     \norm{\mathcal{M}_{1,X} h}_{\ell^2(\Z)}<\delta
     \quad \text{implies}\quad
       \norm{h}_{L^2(\R)}< B_X^{-1/2}.
    \end{equation}
    We apply this to $2 f_N $ for all $N\geq \delta^{-1}$: Thus, \eqref{eq:M_f_l_bound} implies that 
     \begin{equation}
       \norm{2 f_N}_{L^2(\R)}<B_X^{-1/2},
       \quad N\geq \delta^{-1}.
    \end{equation}
    From the upper frame bound \eqref{eq:sampling_ineq_def}, we conclude that, for $N\geq \delta^{-1}$,    \begin{equation}\label{eq:lem:distequiv_C1}
        \norm{C f_N}_{\ell^2(\Z)} \leq B^{1/2}_X\norm{f_N}_{L^2(\R)} < \tfrac{1}{2},
    \end{equation}
while, by \eqref{eq_yyy1},\begin{equation}\label{eq:lem:distequiv_C2}
    \begin{split}
        \norm{Cf_N}_{\ell^2(\Z)} &\geq \norm{n_N}_{\ell^2(\Z)} - \norm{Cf_N-n_N}_{\ell^2(\Z)} \\
        & \geq 1 - \norm{Cf_N-n_N}_{\ell^2(\Z)}  >1 -\tfrac{1}{2N}\geq \tfrac{1}{2}.
    \end{split}
\end{equation}
This contradiction shows that $\Delta_X>0$.

\medskip

\noindent\textbf{Step 2. Positive distance implies continuity.} Suppose that $\Delta_X>0$, let $A_X$ denote the lower frame bound \eqref{eq:sampling_ineq_def} of $X$ and
    let $\varepsilon>0$. 
    Let $h\in PW_{1}(\R)$ be such that
    \begin{align}
        \norm{\mathcal{M}_{1,X} h}_{\ell^2(\Z)} &< 2\Delta_X , \label{eq:lem:distequiv_B2} \\
        \norm{\mathcal{M}_{1,X} h}_{\ell^2(\Z)}& <A_X^{1/2}\varepsilon, \label{eq:lem:distequiv_B3} 
    \end{align}
    and let us show that $\norm{h}_{L^2(\R)} <\varepsilon$. To this end, let $n\in \ltZZi$ be such that 
    \begin{equation}
        \mathcal{M}_{1,X} h = C h-2 n.
    \end{equation}
    By \eqref{eq:lem:distequiv_B2},
    \begin{equation}
         \norm{C(\tfrac{1}{2} h)- n}_{\ell^2(\Z)} =\tfrac{1}{2}\norm{C h-2 n}_{\ell^2(\Z)} < \Delta_X.
    \end{equation}
    Thus $n=0$ must be the zero sequence and $C h = \mathcal{M}_{1,X} h$. Now the frame inequality \eqref{eq:sampling_ineq_def} and \eqref{eq:lem:distequiv_B3} imply
    \begin{equation}
        \norm{h}_{L^2(\R)} \leq A_X^{- 1/2} \norm{Ch}_{\ell^2(\Z)} 
        = A_X^{- 1/2} \norm{\mathcal{M}_{1,X} h}_{\ell^2(\Z)} 
        <\varepsilon,
    \end{equation}
    as desired.
    
    The same proof gives the statement for real-valued bandlimited functions, as $\re (\mathcal{M}_{\lambda} f )= \mathcal{M}_\lambda\re (f)$. 
    \end{proof}
    \begin{remark}[Critical functions]\label{rem:critical_f}
    If the left inverse 
    $\mathcal{T}_{\lambda,X}$
    is discontinuous with respect to the Euclidean metric, then it is also discontinuous with respect to the toral metric. The proof above tells us how to identify the problematic functions.
    If the sequence of functions $(h_N)_{N\in\N}\subseteq PW_\Omega(\R)$ satisfies 
    \begin{equation}
        \lim\limits_{N\to\infty} \ \inf\limits_{\substack{n\in\ltZZi, \\ n\neq 0}} \norm{n- C_X h_N}_{\ell^2(\Z)} = 0
    \end{equation}
    (the real-valued case is analogous),
    then for any $f\in PW_\Omega(\R)$ the sequence of functions 
    $(f+2\lambda h_N)_{N\in\N}$ satisfies
    \begin{equation}
        \lim\limits_{N\to\infty} \norm{\mathcal{M}_{\lambda,X}f - \mathcal{M}_{\lambda,X} (f+2\lambda h_N)}_{\toral} =0
    \end{equation}
    and $(f+2\lambda h_N)_{N\in\N}$ does not converge to $f$. 
    In particular, 
    when $f=0$ and $\lambda=\frac{1}{2}$, $\mathcal{M}_{\frac{1}{2}, X} h_N$ captures how close the samples $C_X h_N$ are to the integers.
    \end{remark}
    \begin{remark} [Generalizations]\label{rem:general} 
    It is not difficult to see that all results in this section can be stated for sampling in general reproducing kernel Hilbert spaces, or even folded sampling of frame coefficients: 
    If $\Psi = (\psi_k)_{k\in\Z}\subseteq \H$ is a frame, then the folded analysis operator 
    \begin{equation}
        \mathcal{M}_{\lambda,\Psi}: \mathcal{H} \to\ell^2(\Z),\quad f\mapsto 2\lambda\left(   \left\{ \tfrac{\left\langle f,\psi_k\right\rangle_\H}{2\lambda}+\tfrac{1}{2}\right\} -\tfrac{1}{2} \right)_{k\in\Z}
    \end{equation} 
    has a (uniformly) continuous left inverse with respect to the Euclidean metric if and only if    \begin{equation}\label{eq:prop_general}
     \Delta_\Psi\coloneqq \inf_{\substack{n\in  \ltZZi,\\ n\neq 0} } \norm{n-P_\Psi n}_{\ell^2(\Z)}>0.
\end{equation} 
    \end{remark}

\section{Analysis of the distance functional for lattice grids}\label{sec:lattice_tools}

In this section, we restrict our attention to sampling lattices $X=\alpha\mathbb{Z}$ and
exploit the structure of the corresponding Gram matrix $G_{\alpha\Z}$ to better describe the distance functional $\Delta_{\alpha\mathbb{Z}}$.

We use the notation
\begin{equation}
    \F:\ell^2(\Z)\to L^2_{1\text{-per}}(\R),\quad  \F c(t) = \hat{c}(t) =\sum\limits_{k\in\Z}c_k e^{-2\pi i k t},
    \quad t\in\R
\end{equation}
for the discrete Fourier transform, which is a unitary operator from the space of square-summable sequences to the space of $1$-periodic functions which are square-integrable on a period. 
\begin{proposition}\label{prop:PQ_description} 
Consider $X = \alpha \Z\subseteq\R$ with $0<\alpha<1$. Then the projection $P_{\alpha\Z}$ to the range of the analysis operator $C_{\alpha\Z}$ is given by
    \begin{equation}\label{eq_yyy7}
         \left(P_{\alpha\Z}\right)_{j,k}= \alpha\, \sinc(\alpha(j-k)), \qquad j,k\in\Z,
    \end{equation}
    and satisfies 
    \begin{equation}\label{eq:lem:P_description_square}
        \begin{split}
            \norm{P_{\alpha\Z} \, c}_{\ell^2(\Z)}^2 & = \langle \hat{c}, \chi_{(-\alpha/2,\alpha/2)}\hat{c}\,\rangle_{L^2([-1/2,1/2])} = \int_{-\alpha/2}^{\alpha/2} |\hat{c}(t)|^2\, dt,
        \end{split}
    \end{equation}
     where $\sinc (t) =\frac{\sin(\pi t)}{\pi t}$ is the cardinal sine function. 
   Consequently, 
\begin{equation}\label{eq:prop:PQ_description_square}
        \begin{split}
        \Delta_{\alpha\Z} = \inf_{\substack{n\in \ltZZi,\\ n\neq 0} }  \norm{n-P_{\alpha\Z}\, n}_{\ell^2(\Z)} 
        &
        =  \inf_{\substack{n\in \ltZZi,\\ n\neq 0} }  \norm{P_{(1-\alpha)\Z}\, n}_{\ell^2(\Z)},
        \end{split}
    \end{equation}
    and
    \begin{equation}\label{eq:complem_proj}
        \Delta_{\alpha\Z,\R} = \inf_{\substack{n\in \ltZZ,\\ n\neq 0} }  \norm{n-P_{\alpha\Z}\, n}_{\ell^2(\Z)} 
        =  \inf_{\substack{n\in \ltZZ,\\ n\neq 0} }  \norm{P_{(1-\alpha)\Z}\, n}_{\ell^2(\Z)}.
    \end{equation}
\end{proposition}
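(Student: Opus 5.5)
We use the formula \eqref{eq:P_psi_proj}, $P_{\alpha\Z} = G_{\alpha\Z} G_{\alpha\Z}^\dagger$, and the observation that $G_{\alpha\Z}$ is a Laurent (convolution) matrix. Here the space is $PW_1(\R)$ (so $\Omega=1$), the sampling set is $X=\alpha\Z$, and by \eqref{eq:Gram_kernel_case} the Gram matrix is
\[
(G_{\alpha\Z})_{j,k} = \sinc(\alpha(j-k)).
\]
Under the discrete Fourier transform $\F$, this Toeplitz matrix is diagonalized. We compute its symbol directly:
\[
\sum_{m} \sinc(\alpha m) e^{-2\pi i m t} = \tfrac{1}{\alpha}\sum_{\ell\in\Z} \chi_{(-\alpha/2,\alpha/2)}(t-\ell)
\]
almost everywhere. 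This follows from the Poisson summation formula, since $\widehat{\sinc(\alpha\,\cdot)} = \alpha^{-1}\chi_{[-\alpha/2,\alpha/2]}$. Because $0<\alpha<1$, the periodized indicator has no overlaps. Hence $\F G_{\alpha\Z}\F^{-1}$ is multiplication by $\alpha^{-1}\chi_{(-\alpha/2,\alpha/2)}$ on $[-1/2,1/2]$.

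The Moore–Penrose pseudoinverse of a multiplication operator by $m$ is multiplication by $m^{-1}$ on $\{m\neq 0\}$ and by $0$ elsewhere. Therefore $P_{\alpha\Z}=G G^\dagger$ is conjugate to multiplication by $\chi_{(-\alpha/2,\alpha/2)}$. Inverting the Fourier transform of this indicator gives the matrix entries
\[
\int_{-\alpha/2}^{\alpha/2} e^{2\pi i (j-k)t}\,dt = \alpha\,\sinc(\alpha(j-k)),
\]
which is \eqref{eq_yyy7}. Since $\F$ is unitary and the multiplier is an idempotent indicator, Parseval's identity gives
\[
\norm{P_{\alpha\Z}c}^2 = \int_{-\alpha/2}^{\alpha/2} |\hat c|^2,
\]
which is \eqref{eq:lem:P_description_square}.

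For \eqref{eq:prop:PQ_description_square}, note that $\id - P_{\alpha\Z}$ is conjugate to multiplication by $\chi_{[-1/2,1/2]\setminus(-\alpha/2,\alpha/2)}$, viewed $1$-periodically. Let $M$ be modulation by $(-1)^k$, that is, $(Mc)_k = (-1)^k c_k$; it corresponds to the shift $t\mapsto t+1/2$ on the Fourier side. This shift maps the periodic set $[-1/2,1/2]\setminus(-\alpha/2,\alpha/2)$ onto $[-(1-\alpha)/2,(1-\alpha)/2]$ modulo $1$, up to null sets. Hence
\[
\id - P_{\alpha\Z} = M P_{(1-\alpha)\Z} M^{-1}.
\]
As a check, the entries agree: $\delta_{jk}-\alpha\sinc(\alpha(j-k)) = (-1)^{j-k}(1-\alpha)\sinc((1-\alpha)(j-k))$ follows from $\sin(\pi(1-\alpha)m) = (-1)^{m+1}\sin(\pi\alpha m)$.

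Finally, $M$ is an isometry that maps $\ltZZi$ bijectively onto itself, and likewise $\ltZZ$, preserving nonzero elements. So the infimum of $\norm{(\id-P_{\alpha\Z})n}$ over nonzero lattice vectors equals the infimum of $\norm{P_{(1-\alpha)\Z}Mn}$, which is the same infimum reindexed. This gives both \eqref{eq:prop:PQ_description_square} and \eqref{eq:complem_proj}.

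The main point requiring care is the pseudoinverse step: one must make sure that the abstract projection formula \eqref{eq:P_psi_proj} transfers correctly through the unitary conjugation. This holds because $\F G\F^{-1}$ is a self-adjoint multiplication operator with closed range; its symbol takes only the values $0$ and $1/\alpha$. The remaining work is checking the Poisson summation identity for the symbol almost everywhere, and handling the endpoint null sets.
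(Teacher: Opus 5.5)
Your proposal is correct and follows essentially the same route as the paper: diagonalize the Laurent Gram matrix via Poisson summation, identify $G_{\alpha\Z}G_{\alpha\Z}^\dagger$ with multiplication by $\chi_{(-\alpha/2,\alpha/2)}$, and use the modulation $(-1)^k$ (a half-period shift on the Fourier side) to pass from $\id-P_{\alpha\Z}$ to $P_{(1-\alpha)\Z}$ before reindexing the infimum over the invariant integer lattices. The only cosmetic difference is that you state the operator identity $\id-P_{\alpha\Z}=MP_{(1-\alpha)\Z}M^{-1}$ and check it entrywise, whereas the paper derives just the corresponding norm identity from $\norm{c-P_{\alpha\Z}c}^2=\norm{c}^2-\norm{P_{\alpha\Z}c}^2$.
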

\begin{proof}
Since $0 < \alpha < 1$, $\alpha \mathbb{Z}$ is a sampling set for $PW_{1}(\R)$. As noted in Section \ref{sec:frame_theory},
    \begin{equation}
        P_{\alpha\Z} = G_{\alpha\Z}G_{\alpha\Z}^\dagger = G_{\alpha\Z}^\dagger G_{\alpha\Z}, \qquad G_{\alpha\Z} =
        \left(\sinc(\alpha(j -k)) \right)_{j,k\in\Z}.
    \end{equation}
    The matrix $G_{\alpha\Z}$ represents the convolution operator with kernel $g_\alpha = \left(\sinc(\alpha k) \right)_{k\in\Z}$. By Poisson's summation formula, for $t\in[-1/2,1/2]$,
\begin{align}
    \widehat{g_\alpha}(t) = \sum\limits_{k\in\Z} \sinc(\alpha k)e^{2\pi i kt} = \alpha^{-1}\sum\limits_{k\in\Z} \chi_{(-1/2,1/2)}(\tfrac{k-t}{\alpha}) = \alpha^{-1} \chi_\alpha(t),
\end{align}
where
$\chi_\alpha$ is the $1$-periodic function that coincides with the indicator function of $(-{\alpha}/{2}, {\alpha}/{2})$
    on $[-1/2,1/2]$, and we used that $0<\alpha<1$. The Gram matrix acts on the Fourier side as the multiplication operator $\mathrm{M}_{\widehat{g_\alpha}} f (t) =\widehat{g_\alpha}(t) f(t)$. Indeed,     \begin{equation}
    \F G_{\alpha\Z} c  = \widehat{G_{\alpha\Z} c} =  \widehat{g_\alpha} \hat{c}= \mathrm{M}_{\widehat{g_\alpha}} \hat{c},\quad c\in\ell^2(\Z).
    \end{equation}
    Thus,
    \begin{equation}
        P_{\alpha\Z} = G_{\alpha\Z}G_{\alpha\Z}^\dagger 
        = (\F^{-1} \mathrm{M}_{\widehat{g_\alpha}} \F) (\F^{-1} \mathrm{M}_{\widehat{g_\alpha}} \F)^{\dagger} 
        =
        \F^{-1} \mathrm{M}_{\widehat{g_\alpha}}  \mathrm{M}_{\widehat{g_\alpha}}^{\dagger} \F
        =\F^{-1} \mathrm{M}_{\chi_\alpha} \F.
    \end{equation}
   This gives \eqref{eq:lem:P_description_square} and 
\begin{equation}     \left(P_{\alpha\Z}\right)_{j,k}= \langle e_j, G_{\alpha\Z}G_{\alpha\Z}^\dagger e_k\rangle_{\ell^2(\Z)} = \int_{-1/2}^{1/2} \chi_\alpha(t)e^{2\pi i (j-k) t} \,dt= \alpha\, \sinc(\alpha(j-k)), 
\end{equation}
as claimed. Finally, by orthogonality and periodicity,
\begin{equation}
    \begin{split}
        \norm{c-P_{\alpha\Z}\, c}_{\ell^2(\Z)}^2 & 
        = \norm{c}_{\ell^2(\Z)}^2 - \norm{P c}_{\ell^2(\Z)}^2 
        = \int_{-1/2}^{-\alpha/2} |\hat{c}(t)|^2\, dt + \int_{\alpha/2}^{1/2} |\hat{c}(t)|^2\, dt
         \\
         &= \int_{1/2}^{1-\alpha/2} |\hat{c}(t)|^2\, dt + \int_{\alpha/2}^{1/2} |\hat{c}(t)|^2\, dt  = 
         \int_{\alpha/2}^{1-\alpha/2} |\hat{c}(t)|^2\, dt
         \\
         & = \int_{-(1-\alpha)/2}^{(1-\alpha)/2} |\hat{c}(t+1/2)|^2\, dt
         = \norm{P_{(1-\alpha)\Z}\, U c}_{\ell^2(\Z)}^2,
    \end{split}
\end{equation}
where $U(c)_k = (-1)^k c_k$, $k\in\Z$.
Since $U$ is a bijective map both on $ \ltZZi$ and $\ltZZ$, taking infimum over $c$ gives \eqref{eq:prop:PQ_description_square} and \eqref{eq:complem_proj}.
\end{proof}

\begin{remark}   
\label{rem:general_proj}
    The first steps of the proof of Proposition \ref{prop:PQ_description} work
    for arbitrary non-equispaced sampling sets $X$ of lower Beurling density $D^-(X) = \alpha^{-1}>1$. However, the corresponding Gram matrix does not have a Toeplitz structure and the projection formula is not as simple as in the case of equispaced sampling. Furthermore, since the frame is not necessarily tight
    (that is the frame bounds may not be equal), the corresponding distance criterion is more complicated.
\end{remark}
\section{Proof of Theorem \ref{thm:main} via prolate matrices and a shortest vector problem}\label{sec:main_via_SVP}
In this section, we interpret the distance criterion as a shortest lattice vector problem and solve it by means of Fourier analysis.
The classical \emph{shortest vector problem (SVP)} (with respect to the $p$-norm) is the problem of determining the shortest non-zero vector of a given lattice $A\Z^N\subseteq \R^N$, $A\in\mathrm{GL}(N,\R)$, $N\in\N$.  That is, we are interested in
\begin{equation}\label{eq:def_svp}
    \mathcal{L}_{A}^p \coloneqq \min_{n\in\Z^N\hspace{-2pt},\, n\neq 0 } \norm{An}_p,\quad \argmin\limits_{n\in\Z^N\hspace{-2pt},\, n\neq 0 } \norm{An}_p.
\end{equation}
The problem was first formulated by Minkowski \cite{Minkowski1968}, who proved:
\begin{equation}\label{eq:Minkowski}
    \mathcal{L}^\infty_{A} \leq |\det A|^{1/N},\qquad \mathcal{L}^p_{A} \leq N^{1/p}|\det A|^{1/N}.
\end{equation}
In our case, the distance criterion (Proposition \ref{prop:inf_crit}) 
suggests an infinite dimensional SVP. The corresponding infinite matrix is the projection $P_{\alpha\Z}$ described in Proposition \ref{prop:PQ_description}. To connect the finite and infinite dimensional theory, we consider certain truncations.

For $\alpha>0$ the \emph{prolate spheroidal wave matrix} $Q_{\alpha, N} \in GL(N,\mathbb{R})$ is
\begin{equation}\label{eq:def_prolate_matrix}
Q_{\alpha, N}= (\alpha \, \sinc(\alpha (j-k)))_{0\leq j,k\leq N-1}
\end{equation}
and was introduced by Slepian in the last installment \cite{Slepian1978} of the Bell Labs series \cite{LandauPollak1961,LandauPollak1962, SlepianPollak1961}. To understand the action of $Q_{\alpha, N}$, consider the operations of extension by $0$ and restriction to the first coordinates:
\begin{align}
&\iota_N:\mathbb{R}^N \to \ell^2(\mathbb{Z}),
\quad
\iota_N(c_0,\ldots,c_{N-1}) = (\ldots,0,c_0, \ldots, c_{N-1}, 0, \ldots),
\\
&\iota^*_N: \ell^2(\mathbb{Z}) \to \mathbb{R}^N,
\quad
\iota^*_N(c) = (c_0,\ldots,c_{N-1}).
\end{align}
With this notation:
\begin{align}\label{eq_yyy5}
Q_{\alpha, N} c = \iota^*_N\big[\iota_N(c) * (\alpha\,\sinc(\alpha k))_{k\in\mathbb{Z}}\big], \qquad c=(c_0,\ldots,c_{N-1}) \in \mathbb{R}^N.
\end{align}
As the convolution in \eqref{eq_yyy5}
is a bandwidth filter, 
\begin{align}\label{eq_yyy8}
\langle Q_{\alpha, N} c, c \rangle
= \int_{-\alpha/2}^{\alpha/2} |\widehat{\iota_N(c)}(t)|^2\, dt, \qquad c \in \mathbb{R}^N,
\end{align}
cf. Proposition \ref{prop:PQ_description}. Hence,
$Q_{\alpha, N}$ enforces a \emph{time-frequency restriction} on a vector $c$: first by truncating its discrete Fourier transform to the bandwidth $[-\alpha/2,\alpha/2]$ and then by restricting the (temporal/spatial) index range to $\{0,\ldots,N-1\}$.

The matrix $Q_{\alpha, N}$ has real entries, is symmetric and positive-semidefinite because of \eqref{eq_yyy8}. 
The spectral profile of $Q_{\alpha, N}$ is well-studied: ordering the eigenvalues of $Q_{\alpha, N}$ as 
\begin{equation}\label{eq_yyy10}
1>\mu_0(Q_{\alpha, N})\geq \mu_1(Q_{\alpha, N})\geq\dots\geq \mu_{N-1}(Q_{\alpha, N})> 0,
    \end{equation}
it turns out that most eigenvalues cluster near either $0$ or $1$, and the transition occurs near the index $k=\alpha N$. We will need the following result by Slepian \cite[eq.~(43--47),~(63)]{Slepian1978}:
given $\alpha, \varepsilon>0$, there exists a constant $\kappa=\kappa(\alpha,\varepsilon)>0$ such that, for sufficiently large $N \in \mathbb{N}$,
\begin{equation}\label{eq:prolate_eigenvalues}       
        \mu_{k}(Q_{\alpha,N}) \leq \mu_{\lfloor \alpha N (1+\varepsilon)\rfloor}(Q_{\alpha,N}) \leq e^{-\kappa N}<1, \quad k  \geq \lfloor \alpha N (1+\varepsilon)\rfloor;
    \end{equation}
see also \cite{MR169015} and \cite{KaRoDa,israel15,BoJaKa,MR4711850,kul2,IsMa,eigenfourier,HuIsMa,HuIsMa2} for recent, more quantitative, results.
    
We are now prepared to prove our first main result.

\begin{proof}[Proof of Theorem \ref{thm:main}]\label{proof:SVP}
    \textbf{Step 1. Reductions.}
    By Lemma \ref{lem:continuity_statements} (iii), it suffices to show that no left-inverse $\mathcal{T}_{\lambda,X}$ exists which is continuous with respect to the Euclidean metric. After this reduction, we further argue that it suffices to consider the case $\Omega=\lambda=1$. Indeed, if $\mathcal{D}_{\Omega}$ and $\mathcal{S}_{\lambda}$ denote the operators
    \begin{align*}  
        \mathcal{D}_{\Omega}: & PW_\Omega(\R)\to PW_1(\R), \quad f\mapsto f(\tfrac{\cdot}{\Omega}), \\
        \mathcal{S}_{\lambda}: & PW_1(\R)\to PW_1(\R), \quad f\mapsto \tfrac{1}{\lambda} f,
    \end{align*}
    we have
     \begin{equation}
        \mathcal{M}_{\lambda, X} f = S_{1/\lambda}\mathcal{M}_{1,\Omega X} \mathcal{S}_\lambda \mathcal{D}_\Omega f,\qquad f\in PW_{\Omega}(\R).
     \end{equation}    
     Since both $\mathcal{D}_{\Omega}$ and $\mathcal{S}_{\lambda}$  are multiples of isometric isomorphisms, we can assume without loss of generality that $\Omega=\lambda=1$.

   Finally, we restrict the range of $\alpha$ as follows. If $\alpha>1$, then $\alpha\mathbb{Z}$ is not a sampling set for $PW_1(\mathbb{R})$ and there exists a non-zero function $f\in PW_1(\mathbb{R})$ such that
   $\mathcal{M}_{1,\alpha\Z} f = C_{\alpha \Z} f=0$. Thus, $\mathcal{M}_{1,\alpha\Z}$ does not admit a left-inverse on its range. If $\alpha=1$, the situation is more subtle. While $\mathbb{Z}$ is a sampling set for $PW_1(\mathbb{R})$, on the set $\mathbb{Z}$, the function $2\,\sinc \in PW_1(\mathbb{R})$ takes values in $2 \mathbb{Z}$ and, thus,
   $\mathcal{M}_{1,\Z} (2\,\sinc) = 0$. Hence, again, $\mathcal{M}_{1,\alpha\Z}$ does not admit a left-inverse on its range.
   From now on, we assume that $0<\alpha<1$ and investigate the continuity. By Theorem \ref{thm_injectivity}, $\mathcal{M}_{1,\alpha\Z}$ does admit a left-inverse 
   $\mathcal{T}_{1,\alpha\Z}: \mathrm{Range}(\mathcal{M}_{1,\alpha\Z}) \to PW_1(\mathbb{R})$, but we shall disprove its continuity with respect to the Euclidean metric.
   
  \medskip
    
    \noindent\textbf{Step 2. The shortest vector problem.}
 By Proposition \ref{prop:PQ_description},
    \begin{align}
        \Delta_{\alpha \Z} =\inf_{\substack{n\in  \ltZZi,\\ n\neq 0} } \norm{n-P_X n}_{\ell^2(\Z)} \leq \Delta_{\alpha \Z,\R} &=\inf_{\substack{n\in  \ltZZ,\\ n\neq 0} } \norm{n-P_X n}_{\ell^2(\Z)} 
        \\
        &=\inf\limits_{\substack{ n\in \ltZZ \\n \neq 0}} \norm{P_{(1-\alpha)\Z}n }_{\ell^2(\Z)}.
    \end{align}
    By Proposition \ref{prop:inf_crit},
    in order to disprove the continuity of $\mathcal{T}_{1,\alpha\Z}$ with respect to the Euclidean metric, it suffices to show that $\Delta_{\alpha\Z,\R} =0$.
    
    Let $c\in \ell^2(\Z)$ be a finitely supported sequence, with $c_k =0$ whenever $k< k_0$ or $k> k_0+N-1$. 
Let $d=\iota^*_N (c_{k_0+k})_{k\in\Z}$, i.e.,  $d_k=c_{k_0+k}$, $0\leq k\leq N-1$.
Then the projection \eqref{eq_yyy7} and the prolate matrix \eqref{eq:def_prolate_matrix} are related by
\begin{equation}
    \norm{P_{(1-\alpha)\Z} c}_{\ell^2(\Z)}^2 
    = \langle c, P_{(1-\alpha) \Z} c\rangle _{\ell^2(\Z)} 
    = d\cdot Q_{1-\alpha, N} d = \norm{Q_{1-\alpha, N}^{1/2} d}_{2}^2,
\end{equation}
where $Q_{1-\alpha, N}^{1/2} \in \mathbb{R}^{N\times N}$ is the symmetric positive square root of $Q_{1-\alpha, N}^{1/2}$. Hence,
    \begin{equation}
        \Delta_{\alpha\Z,\R} =\inf\limits_{\substack{ n\in \ltZZ \\n \neq 0}} \norm{P_{(1-\alpha)\Z}n }_{\ell^2(\Z)}
        =\lim\limits_{N\to\infty} \inf\limits_{ n\in \Z^N} \norm{Q_{1-\alpha, N}^{1/2}n }_{2}.
    \end{equation}
     The inner infimum is monotone in $N$, and equals the shortest lattice distance $\mathcal{L}_{Q_{1-\alpha, N}^{1/2}}^2$. Hence, by Minkowski's bound \eqref{eq:Minkowski}:
    \begin{align*}
        \Delta_{\alpha\Z,\R} = \lim\limits_{N\to\infty} \mathcal{L}_{Q_{1-\alpha, N}^{1/2}}^2
        \leq \liminf\limits_{N\to\infty}
        \sqrt{N} \big(\det Q_{1-\alpha, N}^{1/2}\big)^{\tfrac{1}{N}}
        = \liminf\limits_{N\to\infty}
        \sqrt{N} (\det Q_{1-\alpha, N})^{\tfrac{1}{2N}}.
    \end{align*}

\medskip
    
\noindent\textbf{Step 3. Prolate eigenvalue estimates.}
We consider the eigenvalues \eqref{eq_yyy10}
of $Q_{1-\alpha, N}$. Let $\varepsilon>0$ be small enough such that $(1-\alpha)(1+\varepsilon)<1$, and $N$ sufficiently large so as to apply \eqref{eq:prolate_eigenvalues}. We set $k_0 =\lfloor (1-\alpha)N (1+\varepsilon)\rfloor$ and consider the constant $\kappa = \kappa(1-\alpha,\varepsilon)$ from \eqref{eq:prolate_eigenvalues}.
    Then
    \begin{align*}
        &\sqrt{N} (\det Q_{1-\alpha, N})^{\frac{1}{2N}} \leq \sqrt{N}\Big( \prod\limits_{l= k_0}^{N-1} e^{-\kappa N}\Big)^{\frac{1}{2N}}
        \leq \sqrt{N} \left( e^{-\kappa N}\right)^{\frac{N-k_0}{2N}} \\
        & \qquad \leq \sqrt{N} \left( e^{-\kappa N}\right)^{\frac{N-(1-\alpha)N(1+\varepsilon)}{2N}}
         =        
        \sqrt{N} \big(e^{-\kappa\tfrac{1-(1-\alpha)(1+\varepsilon)}{2}}\big)^N \longrightarrow 0,
    \end{align*}
    as $N \to \infty$. This shows that $\Delta_{\alpha\Z,\R}=0$.
\end{proof}

\section{A second proof of Theorem \ref{thm:main}}
\label{sec:main_via_TP}
In this section, we provide an alternative proof of Theorem \ref{thm:main} which is still based on the distance criterion and the analysis of the distance functional for sampling grids, but
avoids the eigenvalue estimates of the prolate spheroidal wave matrix and Minkowski's bound, at least directly. 

\subsection{An explicit example}
We begin by demonstrating the approach with an explicit example.  

\begin{theorem}\label{thm:two_thirds_binom}
    Let ${\frac{2}{3}}<\alpha <1$. Then $\Delta_{\alpha \Z,\R} = \Delta_{\alpha \Z} =0$.
\end{theorem}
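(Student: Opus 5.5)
We use the reformulation from Proposition \ref{prop:PQ_description}. Since $\ltZZ\subseteq\ltZZi$, we have $\Delta_{\alpha\Z}\le\Delta_{\alpha\Z,\R}$, so it suffices to produce nonzero integer sequences $n^{(N)}\in\ltZZ$ with
\begin{equation}
\norm{P_{(1-\alpha)\Z}\, n^{(N)}}_{\ell^2(\Z)}^2=\int_{-(1-\alpha)/2}^{(1-\alpha)/2}|\widehat{n^{(N)}}(t)|^2\,dt\longrightarrow 0 .
\end{equation}
Put $\beta=1-\alpha\in(0,\tfrac13)$. We need finitely supported integer sequences whose trigonometric polynomials are uniformly small on $[-\beta/2,\beta/2]$. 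The constraint $\beta<\tfrac13$ suggests a binomial construction: take $n^{(N)}$ to be the coefficients of $(1-e^{-2\pi i t})^{N}$, i.e. $n^{(N)}_k=(-1)^k\binom{N}{k}$ for $0\le k\le N$. These are nonzero integers. However $|1-e^{-2\pi i t}|=2|\sin\pi t|$ is small only for $|t|<1/6$, so this choice works for $\beta<1/3$ only after we recenter at $t=1/2$ (equivalently, use the sign flip $U$ from the proof of Proposition \ref{prop:PQ_description}). We therefore work directly with the original band as well. We have $\norm{n-P_{\alpha\Z}n}^2=\int_{\alpha/2}^{1-\alpha/2}|\hat n|^2$, which is an interval of length $1-\alpha<1/3$ centered at $1/2$. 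Taking $n_k=\binom{N}{k}$ gives $\hat n(t)=(1+e^{-2\pi i t})^N$ with $|\hat n(t)|=|2\cos\pi t|^N$.

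\textbf{Key estimate.} For $t\in[\alpha/2,1-\alpha/2]$ we have $|t-\tfrac12|\le\beta/2<1/6$, hence $|\cos\pi t|=|\sin\pi(t-\tfrac12)|\le\sin(\pi\beta/2)<\sin(\pi/6)=\tfrac12$. So $q:=2\sin(\pi\beta/2)<1$. Therefore
\begin{equation}
\norm{n^{(N)}-P_{\alpha\Z}n^{(N)}}_{\ell^2(\Z)}^2\le (1-\alpha)\,q^{2N}\longrightarrow 0,
\end{equation}
so $\Delta_{\alpha\Z,\R}=0$, and consequently $\Delta_{\alpha\Z}=0$ as well.

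The only genuinely delicate point is bookkeeping. We must make sure the Fourier convention ($\hat c(t)=\sum c_ke^{-2\pi ikt}$) and the identification of the complementary band $[\alpha/2,1-\alpha/2]$ (mod 1) match Proposition \ref{prop:PQ_description}, so that the band really is centered where $|1+e^{-2\pi it}|$ vanishes, namely at $t=1/2$. The threshold $\tfrac23$ appears exactly as the condition $2\sin(\pi(1-\alpha)/2)<1$. The remaining steps are routine. Remark \ref{rem:critical_f} then yields the explicit critical functions $2\lambda h_N$, where $h_N$ interpolates $C_{\alpha\Z}$-approximations of $\binom{N}{k}$.
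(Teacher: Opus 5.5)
Your proof is correct and is the paper's argument up to the sign flip $U$. The paper takes alternating binomial coefficients and bounds $\norm{P_{(1-\alpha)\Z}n}^2_{\ell^2(\Z)}=2^{2N}\int_{-(1-\alpha)/2}^{(1-\alpha)/2}\sin(\pi t)^{2N}\,dt\le(1-\alpha)\bigl(2\sin\tfrac{\pi(1-\alpha)}{2}\bigr)^{2N}$, whereas you take plain binomials and integrate $|2\cos\pi t|^{2N}$ over $[\alpha/2,1-\alpha/2]$, which is the same integral after $t\mapsto t+\tfrac12$.

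Your aside that the alternating choice works ``only after we recenter'' is inaccurate, though harmless. Because Proposition~\ref{prop:PQ_description} equates the two infima, the alternating binomials already work directly against $P_{(1-\alpha)\Z}$, and this is precisely the paper's route.
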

\begin{proof}
    For $N \in \mathbb{N}$ let\begin{equation}\label{eq:def:binom_coeff}
        n_k=n_{N,k} =\begin{cases}
        (-1)^{N-k}\binom{N}{N-k},&\quad 0\leq k\leq N,\\
        0,&\quad \text{otherwise.}
        \end{cases}
    \end{equation}
    Its Fourier transform is given by
    \begin{align*}
        \hat{n}(t) 
        & = \sum\limits_{k=0}^N \begin{psmallmatrix}N \\ N-k \end{psmallmatrix}
        e^{2\pi i kt}(-1)^{N-k} = \sum\limits_{k=0}^N \begin{psmallmatrix}N \\ k \end{psmallmatrix}e^{2\pi i (N-k)t}(-1)^{k}  
        = (e^{2\pi i  t}-1)^N \\
        & = \left(2i \, e^{\pi i t}\cdot\tfrac{e^{\pi i t} -e^{-\pi i t}}{2i}\right)^N
        = (2 i\, e^{\pi i t})^N \sin(\pi t)^N.
    \end{align*}
    We aim to apply Proposition \ref{prop:PQ_description}, specifically, \eqref{eq:complem_proj} in terms of $P_{(1-\alpha)\Z}$ to $n_N$:
      \begin{align*}
        \norm{P_{(1-\alpha)\Z}\,n}_{\ell^2(\Z)}^2
        = 2^{2N}\int_{-(1-\alpha)/2} ^{(1-\alpha)/2} \sin({\pi t})^{2N}\, dt.
    \end{align*}
    As the integrand is even and strictly monotonically increasing on $(0, \frac{1-\alpha}{2})\subseteq (0,\frac{\pi}{2})$, 
     \begin{align*}
    \norm{P_{(1-\alpha)\Z}\,n}_{\ell^2(\Z)}^2
    & = 2^{2N+1}\int_{0} ^{(1-\alpha)/2} \sin({\pi t})^{2N}\, dt \leq 2^{2N+1}\int_{0} ^{(1-\alpha)/2} \sin\left(\tfrac{\pi(1-\alpha)}{2}\right)^{2N}\, dt \\
    & = (1-\alpha) \left(2\sin\big(\tfrac{\pi(1-\alpha)}{2}\big)\right)^{2N}.
    \end{align*}
     Since $\frac{2}{3}<\alpha$, we have $\frac{\pi(1-\alpha)}{2}< \frac{\pi}{6}$, and, therefore, 
    $\sin(\frac{\pi(1-\alpha)}{2}) < \sin(\frac{\pi}{6}) = \frac{1}{2}$.
    Hence, $\norm{P_{(1-\alpha)\Z}\,n}_{\ell^2(\Z)} \to 0$, as $N \to \infty$, as desired.
\end{proof}
With the notation of the previous proof, the bandlimited functions with samples $P_{\alpha\Z} n_N$ can be computed as $ \alpha \,C_{\alpha\Z}^*\,n_N$.
By Remark \ref{rem:critical_f}, they are functions with samples arbitrarily close to the integers and indicate the discontinuity of $\mathcal{M}_{\frac{1}{2},\alpha\Z}$. We plot a few of these functions in Figures \ref{fig_xx} and \ref{fig:norm_compare}. For better visualization, we choose $N$ even
and re-center them as
$f_N = \alpha \, C_{\alpha\Z}^* \, m_{N}$ with $m_{N,k}= n_{N, k+N/2}$.
\begin{figure}[h]
\vspace{-10pt}
\subfigure[$f_N =\alpha C_{\alpha \Z}^* m_4$, $\alpha=0.7$.]{
\includegraphics[width=.3\textwidth]{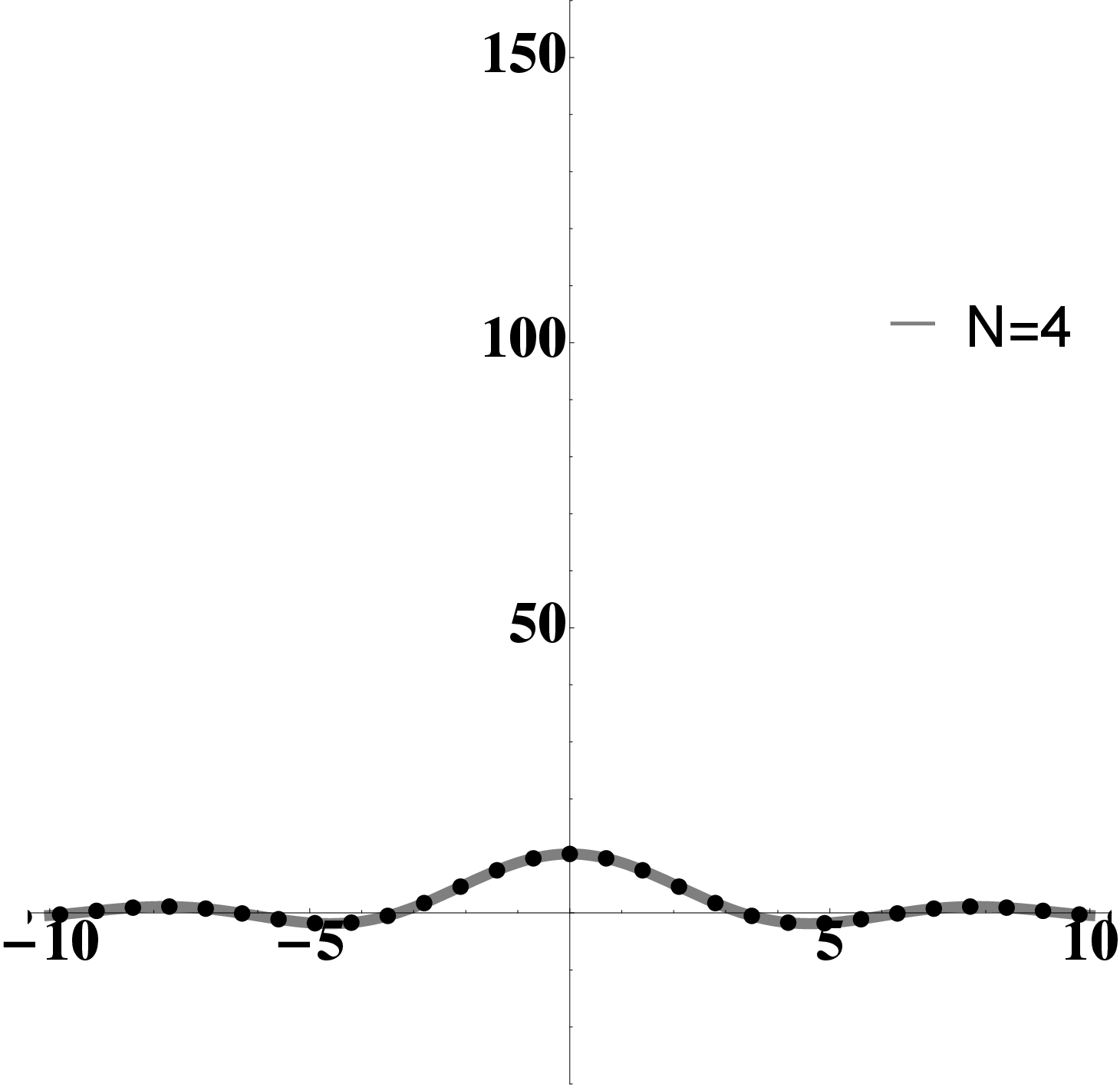}
	}
	\hfill
	\subfigure[$f_N =\alpha C_{\alpha \Z}^* m_6$, $\alpha=0.7$.]{
\includegraphics[width=.3\textwidth]{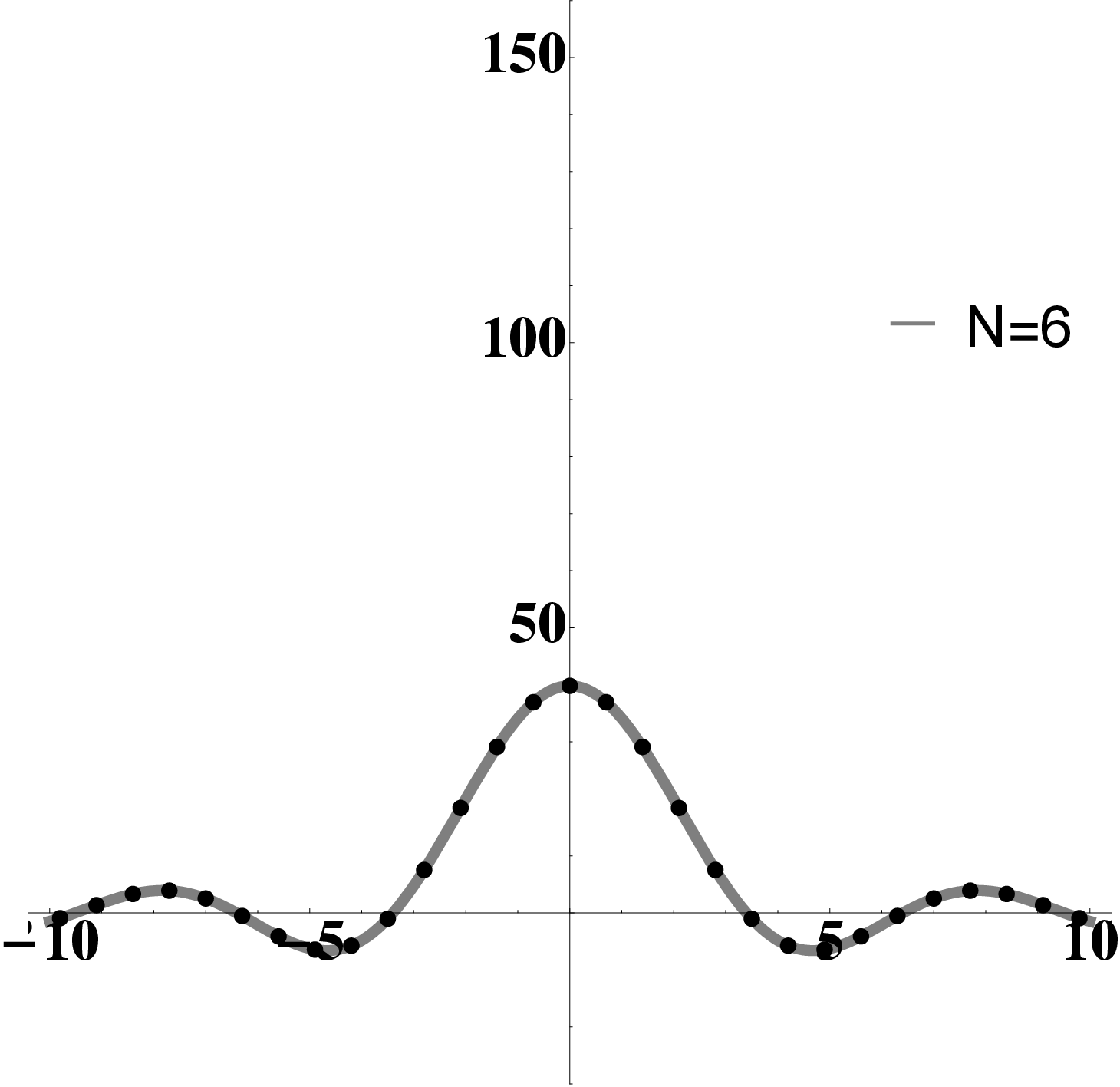}
	}
    \hfill
	\subfigure[$f_N =\alpha C_{\alpha \Z}^* m_8$, $\alpha=0.7$.]{
\includegraphics[width=.3\textwidth]{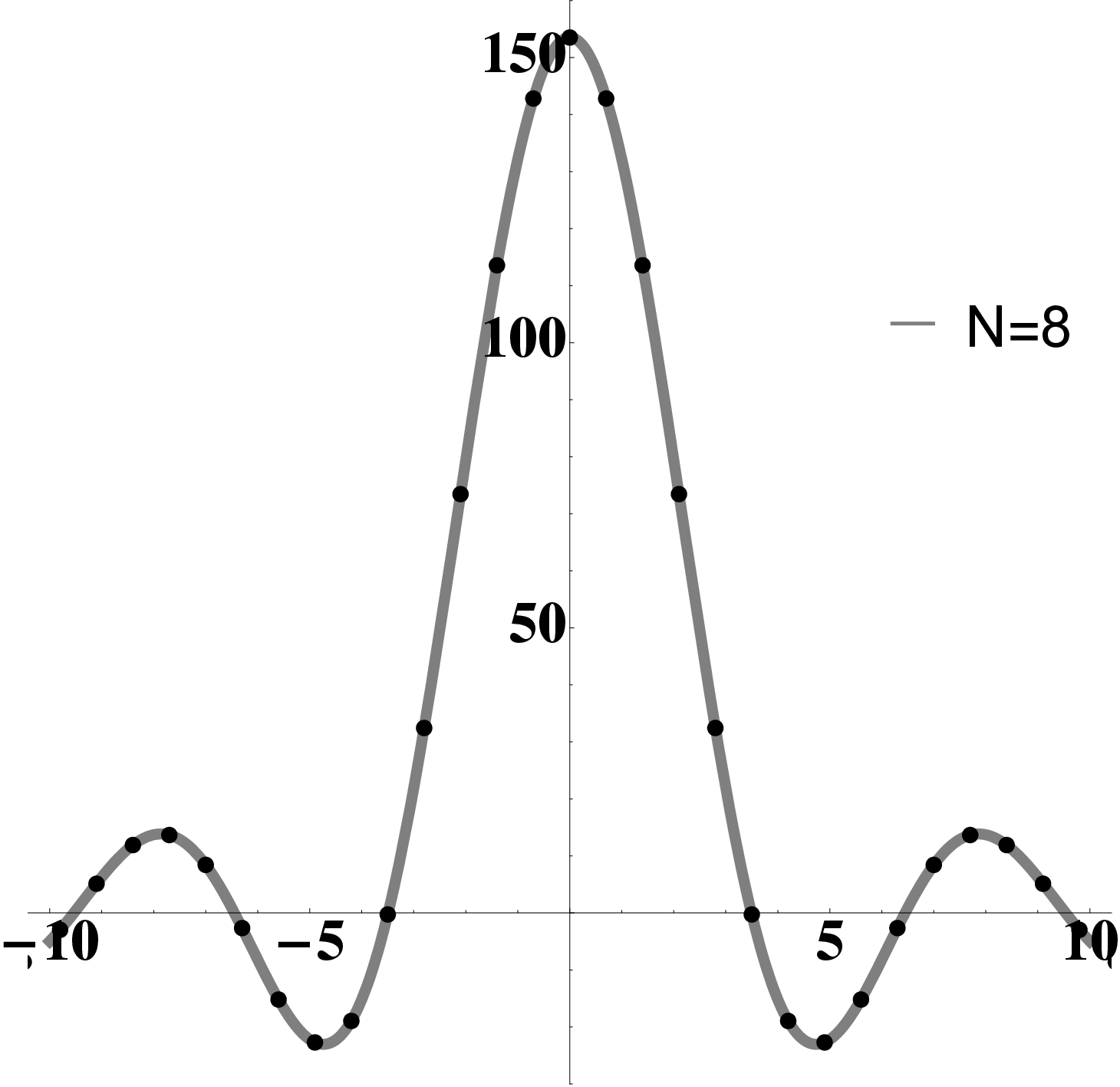}
	}
    \vspace{-10pt}
\caption{Bandlimited functions whose samples are given by the sequences in the proof of Theorem \ref{thm:two_thirds_binom}.
}
\label{fig_xx}
\end{figure}

\begin{figure}[h] 
	\subfigure[$f_N =\alpha C_{\alpha \Z}^* m_N$, $\alpha=0.7$.
    ]{
\includegraphics[width=.35\textwidth]{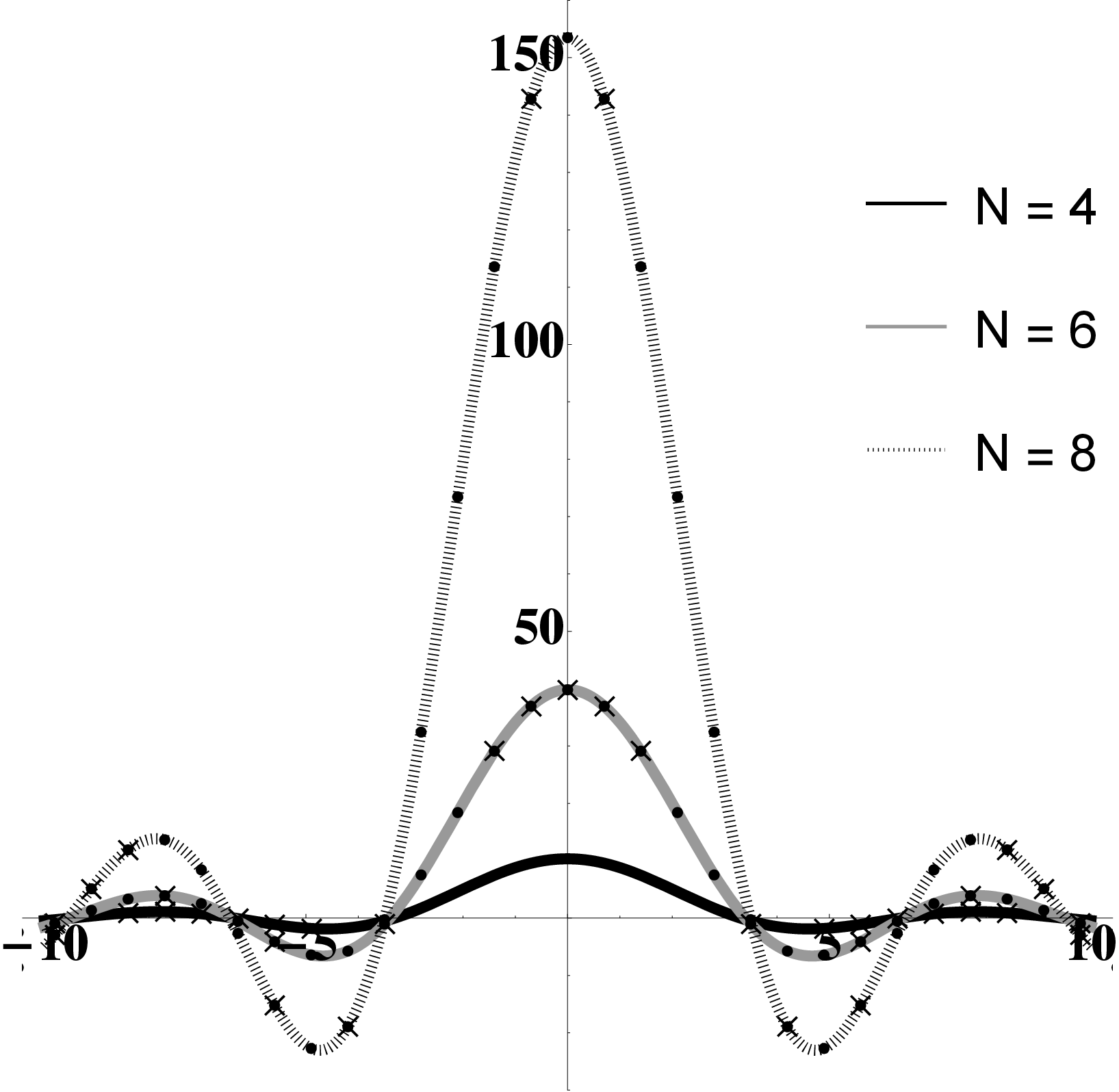}
	}\hspace{1em}
	\subfigure[Log-plot of norms]{
\includegraphics[width=.38\textwidth]{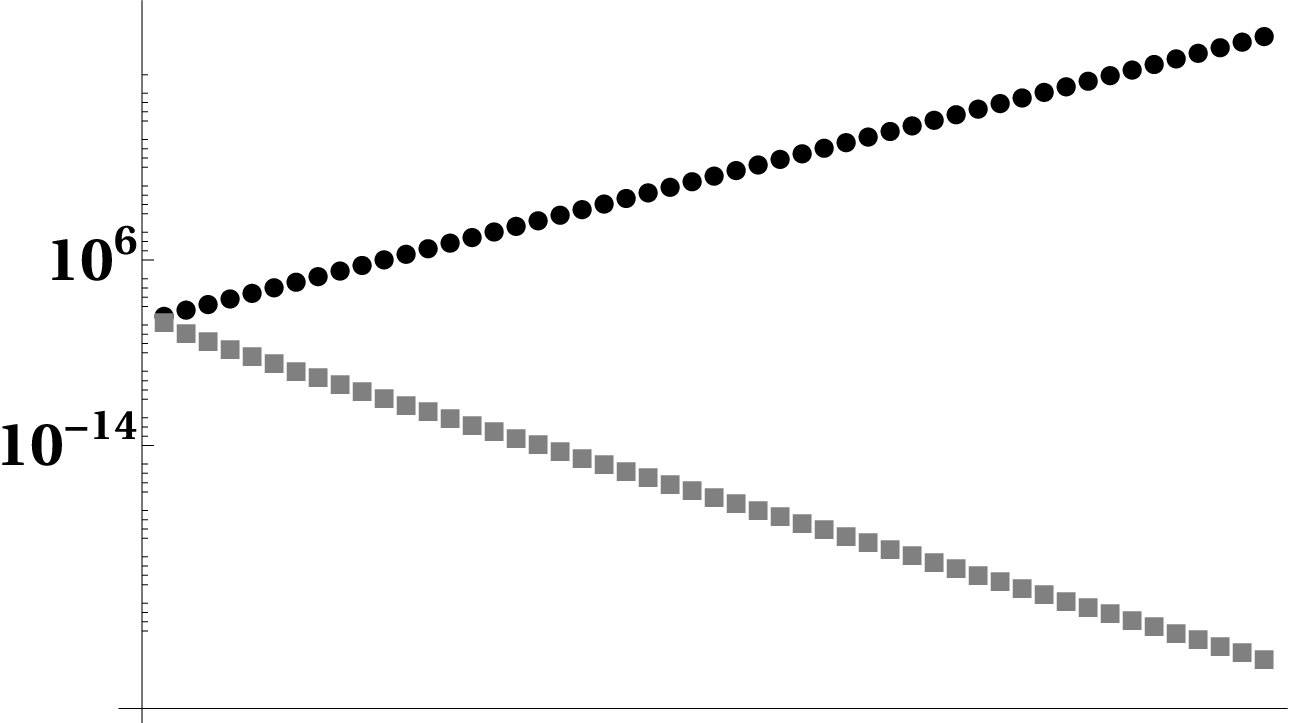}
	}
    \caption{Left: the functions $f_N$, sampled at $0.7\Z$ (black dots) and the measurements with $|\mathcal{M}_{\lambda}f(0.7 k)|<0.25$, $\lambda=\frac{1}{2}$ (crosses). Right: log-plot of the norms of $P_{\alpha\Z} m_N$ (black circles) and the norms of $n-P_{\alpha\Z} m_N$ (gray squares) $1\leq N\leq 30$. }\label{fig:norm_compare}
\end{figure}

\subsection{Proof of Theorem \ref{thm:main} via Tschebyschev polynomials}

We elaborate on the previous example by looking for trigonometric polynomials with integer coefficients that are well-concentrated on a given interval.

Recall that the monic Tschebyschev  polynomial $T_m$ minimizes the supremum norm on a given interval $[a,b]$ among all monic polynomials of degree $m$ \cite[Thm.~2.1.]{Rivlin2020}. The \emph{integer Tschebyschev polynomials} 
$T^{\mathbb{Z}}_m$ on $[a,b]$ are non-zero polynomials with integer coefficients which minimize the supremum norm among all non-zero polynomials with integer coefficients of fixed degree. The integer restriction 
makes the minimization problem much more subtle, because the problem is not invariant under arbitrary translations and dilations. Since
\begin{equation}
    \norm{T^{\mathbb{Z}}_m}_{L^\infty(a,b)}\geq  \norm{T_m}_{L^\infty(a,b)} = 2 \tfrac{(b-a)^m}{4^m},
\end{equation}
one can only hope to obtain a small 
supremum norm if $b-a<4$. Under this condition, Kashin's bound \cite{Kashin1991} 
- which improves on a similar estimate by Fekete \cite{Fekete1923} - states that there exists a universal constant $C>0$ such that 
\begin{equation}\label{eq:kashin}
    \norm{T^{\mathbb{Z}}_m}_{L^\infty(a,b)} \leq C\, m^{1/2}\left( \tfrac{b-a}{4} \right)^{m/2}.
\end{equation}
This allows us to give a second proof of Theorem \ref{thm:main}.
\begin{proof}[Second proof of Theorem \ref{thm:main}]\label{proof:Tschbyschev}
 We proceed as in Step 1 of the proof in Section \ref{sec:main_via_SVP}: We consider $\Omega=\lambda=1$ and $0<\alpha<1$, 
    invoke Propositions \ref{prop:PQ_description} and  \ref{prop:inf_crit} and analyze
    \begin{align}
        \Delta_{\alpha \Z,\R} &=\inf_{\substack{n\in  \ltZZ,\\ n\neq 0} } \norm{n-P_X n}_{\ell^2(\Z)} =\inf\limits_{\substack{ n\in \ltZZ \\n \neq 0}} \norm{P_{(1-\alpha)\Z}n }_{\ell^2(\Z)}
        \\
        &=        \Big[\inf\limits_{\substack{ n\in \ltZZ \\n \neq 0}}
        \int_{-(1-\alpha)/2}^{(1-\alpha)/2} |\hat{n}(t)|^2\, dt\Big]^{1/2}. 
    \end{align}
    We want to show that $\Delta_{\alpha \Z,\R}=0$. Let $N \in \mathbb{N}$, $T_N^{\mathbb{Z}}$ an integer Tschebyschev polynomial, and 
\begin{align}
P_N(t)=T_N^{\mathbb{Z}}(2\cos(2\pi t))= T_N^{\mathbb{Z}}(e^{2\pi i t}+e^{-2\pi i t}) = \sum\limits_{k=-N}^N n_{N,k} e^{2\pi i kt}
\end{align}
the corresponding trigonometric polynomial, whose coefficients $n_{N,k}$ are also integers. 
We estimate
\begin{align}
&\int_{-(1-\alpha)/2}^{(1-\alpha)/2} |\widehat{n_{N}} (t)|^2 dt
=
\int_{-(1-\alpha)/2}^{(1-\alpha)/2} |T_N^{\mathbb{Z}}(2\cos(2\pi t))|^2 dt
\\
&\qquad\leq
(1-\alpha)\sup \left\{|T_N^{\mathbb{Z}}(x)|^2 : x = 2\cos(\pi t), \, |t|\leq 1-\alpha\right\}.
\end{align}
Since $0<1-\alpha<1$, 
\begin{equation}
    \{ x = 2\cos(\pi t): \, |t|\leq 1-\alpha\} = [2\cos(\pi (1-\alpha)), 2] \subseteq (-2,2],
\end{equation}
and we can apply the Fekete-Kashin bound \eqref{eq:kashin} with $a = 2\cos(\pi (1-\alpha))$, $b = 2$. Since $0<b-a<4$, this implies that

\begin{align}
\int_{-(1-\alpha)/2}^{(1-\alpha)/2} |\widehat{n_{N}} (t)|^2 dt \leq C^2 (1-\alpha)\, N\left( \tfrac{b-a}{4} \right)^{N} \longrightarrow 0,
\end{align}
as $N \to \infty$, and $\Delta_{\alpha \Z,\R}=0$.
\end{proof}
The two proofs of Theorem of \ref{thm:main} that we presented 
rely on different, but closely related techniques; see \cite{Kashin1991}.

\section{Continuous reconstruction on bounded sets}\label{sec:bounded_case}

We investigate the continuity of the left-inverse of the folded sampling operator when the sampling problem is supplemented with an a priori bound
on the size of the inputs. For equispaced sampling configurations, 
the algorithms developed in \cite{BhandariKrahmerPoskitt2022} imply that a priori bounds on the size of inputs lead to a certain stability guarantee for the folded reconstruction problem. We shall stablish this fact also for non-uniform sampling configurations, and, as corollary, extend the injectivity result \eqref{thm_injectivity} to arbitrary sampling geometries.

\begin{proof}[Proof of Theorem \ref{thm:main_bounded}]
We begin by first considering the case where one of the functions is the zero function. We establish the inequality and then apply it to the difference $h=f-g$.

\noindent\textbf{Step 1. Removing peaks.}
    Let $h\in PW_{\Omega}(\R)$ with $\norm{h}_{L^2(\R)}<M$ be given. 
    We split the samples as $C_X h=a+\mathcal{M}_{\lambda,X} h$.     
    Since $X$ is a sampling set, by the triangle inequality and the upper frame bound
    \begin{equation}
        \norm{a}_{\ell^2(\Z)} \leq \norm{C_X h}_{\ell^2(\Z)} +\norm{\mathcal{M}_{\lambda,X} h}_{\ell^2(\Z)}\leq B_X^{1/2}\norm{h}_{L^2(\R)} +\norm{\mathcal{M}_{\lambda,X} h}_{\ell^2(\Z)}.
    \end{equation}
Note that
\begin{equation}
    \norm{\mathcal{M}_{\lambda,X} h}_{\ell^2(\Z)} \leq \norm{C_X h}_{\ell^2(\Z)} \leq B^{1/2}_X\norm{h}_{L^2(\R)},
\end{equation} 
so $\norm{a}_{\ell^2(\Z)}$ can be further bounded independently of $\norm{h}_{L^2(\R)}$ by
\begin{equation}
    \norm{a}_{\ell^2(\Z)} \leq 2B_X^{1/2}M.
\end{equation}
The sequence $a$ takes on values in $2\lambda\, \Z[i]$, hence we can bound the number of elements of $F_h = \{x_k\in X: a_k\neq 0 \}$ by
\begin{equation}\label{eq_yyy12}
    \# F_h \leq \sum\limits_{k\in\Z} \left|\tfrac{a_k}{2\lambda} \right|^2 = (2\lambda)^{-2}\norm{a}_{\ell^2(\Z)}^2
    \leq \lambda^{-2} B_X M^2.
\end{equation}
By construction of the set $F_h$, $h(x_k) = \mathcal{M}_\lambda h(x_k)$ for all $x_k\in X\setminus F_h$. 
Note that $D^-(X\setminus F_h) =D^-(X)>\Omega$, so we can apply the frame inequality for $X\setminus F_h$ and Lemma \ref{lem:toral_translation_inv} to obtain 
\begin{equation}
    A_{X\setminus F_h}\norm{h}^2_{L^2(\R)} \leq \norm{\mathcal{M}_{\lambda,X}h}_{\ell^2(\Z)}^2 =
    \norm{\mathcal{M}_{\lambda,X} h}_{\toral}^2 .
\end{equation}
To prove the claim, though, we need to prove that a positive lower frame bound $A_{M,\lambda, X}\leq A_{X\setminus F_h}$ can be established independently of $h$.

\medskip

\noindent\textbf{Step 2. A uniform lower bound.}
We aim to find $A_{M,\lambda,X}>0$ with
\begin{equation}\label{eq:h_ineq}
       A_{M,\lambda, X} \norm{h}^2_{L^2(\R)} \leq \norm{\mathcal{M}_{\lambda,X} h}_{\ell^2(\Z)}^2
\end{equation}
by showing that the quantities in Theorem \ref{thm:bound_dependency} can be established for $X\setminus F_h$ independently of $h$. 
To this end, let
\begin{equation}
    0<\delta = \inf\limits_{\substack{x,x'\in X, \\ x\neq x'}}|x-x'| \leq \inf\limits_{\substack{x,x'\in X\setminus F_h, \\ x\neq x'}}|x-x'|.
\end{equation}
Thus, the separation parameter can be chosen independently of $h$, and it remains to establish \eqref{eq:choice_r}.
We select $0< \varepsilon<\tfrac{1}{2}(D^-(X)-\Omega)$. 
Due to the definition of the lower Beurling density \eqref{eq:def:lower_beurling}, we can choose $r_X>0$  such that 
\begin{equation}
    \inf\limits_{t\in\R} \tfrac{\#X\cap [t-r,t+r]}{2r} >\Omega+2\varepsilon, \quad r\geq r_X.
\end{equation}
Using \eqref{eq_yyy12}, for $ t\in\ \R$, and $r\geq r_X$,
\begin{equation}
    \tfrac{\#(X\setminus F_h)\cap [t-r,t+r]}{2r} \geq \tfrac{(\#X\cap [t-r,t+r]) - \#F_h}{2r}
    \geq \Omega+2\varepsilon -\tfrac{B_X M^2}{2\lambda^{2} r}.
\end{equation}
To bound the last expression from below by $\Omega+\varepsilon$, it suffices to consider $r$ large enough, more precisely, 
\begin{equation}\label{eq:quant_r}
    r\geq \max\left\lbrace r_X,  \tfrac{B_X M^2}{2\lambda^2\varepsilon}\right\rbrace.
\end{equation} 
Hence, by Theorem 
\ref{thm:bound_dependency}
\begin{equation}
    0< A\left( 
    \Omega, \delta, \varepsilon, \max\left\lbrace r_X,  \tfrac{B_X M^2}{2\lambda^2\varepsilon}\right\rbrace
    \right)\eqqcolon A_{M,\lambda, X}\leq A_{X\setminus F_h}.
\end{equation}

We first establish the injectivity of $\mathcal{M}_{\lambda,X}$ on $PW_\Omega(\R)$ in order to be able to discuss a left inverse.

\medskip

\noindent{\hypertarget{step:bounded_inj}{\textbf{Step 3. Global injectivity.}}}
  Let $f,g\in PW_\Omega(\R)$ such that $\mathcal{M}_{\lambda,X} f = \mathcal{M}_{\lambda,X} g$, and set $h = f-g\in PW_\Omega(\R)$. 
  By Lemma \ref{lem:toral_translation_inv}, 
  \begin{equation}
      \mathcal{M}_{\lambda,X} h = \mathcal{M}_{\lambda,X} f -\mathcal{M}_{\lambda,X} g = 0.
  \end{equation}
  Let $M =\norm{h}_{L^2(\R)}$ and $\mathcal{B}$ be the closed ball of radius $M$ in $PW_\Omega(\R)$. 
  By Step 2, $h=0$, and $f=g$.

\medskip

\noindent\textbf{Step 4. Lipschitz continuity.} Let $f,g\in \mathcal{B}$. Then $h=f-g\in 2\mathcal{B}$, and by Lemma \ref{lem:toral_translation_inv},
\begin{equation}
    \norm{\mathcal{M}_{\lambda,X} f-\mathcal{M}_{\lambda,X} g}_{\toral} = \norm{\mathcal{M}_{\lambda,X} h}_{\toral} = \norm{\mathcal{M}_{\lambda,X} h}_{\ell^2(\Z)}.
\end{equation}
By Step 2, 
\begin{equation}
    \norm{f-g}_{L^2(\R)} =  \norm{h}_{L^2(\R)} \leq A_{2M,\lambda,X}^{-1/2} \norm{\mathcal{M}_{\lambda,X} h}_\toral =A_{2M,\lambda,X}^{-1/2}  \norm{\mathcal{M}_{\lambda,X} f-\mathcal{M}_{\lambda,X} g}_{\toral}.
\end{equation}
This concludes the proof. 
\end{proof}
\begin{remark}
    Since $|z-w|_\lambda \leq |z-w|$ for all $z,w\in\C$ and all $\lambda>0$, in the context of Theorem \ref{thm:main_bounded},
    the left-inverse $\mathcal{T}_{\lambda,X}$ of $\mathcal{M}_{\lambda,X}$ is Lipschitz continuous with respect to the Euclidean metric: 
    \begin{equation}
        \norm{f-g}_{L^2(\R)} \leq C \norm{\mathcal{M}_{\lambda,X} f - \mathcal{M}_{\lambda,X} g}_{\ell^2(\Z)}.
    \end{equation}
\end{remark}
  Theorem \ref{thm:main_bounded} reestablishes the injectivity results in \cite{BhandariKrahmer2019,RomanovOrdentlich2019} and extends their scope to non-equispaced sampling geometries. The proof also shows that, as the folding threshold $\lambda$ grows, the importance of the a priori bound $M$ on the input size decreases, cf. \eqref{eq:quant_r}, and, in the limit
  $\lambda \to \infty$, it disappears.

\section{Conclusions and open problems}\label{sec_con} 
We showed that, without a priori energy bounds, the problem of recovering bandlimied functions from folded (modulo) samples taken on equispaced grids is ill-conditioned. The instability result brings context to the recent literature in unlimited sampling, showing that there is room for a more quantitative analysis of the various injectivity or encoding guarantees. We contributed to this topic by showing that, even for non-uniform sampling geometries, where sampling-reconstruction formulas are not explicit, any a priori energy bound regularizes the folded sampling problem. Our proof suggests a general trade-off 
in the stability margins between the folding threshold and the a priori energy bounds. A precise quantification of this observation, and an extension of the instability result to arbitrary geometries remain open problems (cf. Remark \ref{rem:general_proj}).
  
\section{Proof of Theorem \ref{thm:bound_dependency}}\label{proof_samp}
We only sketch the argument and refer the reader to \cite{BeurlingCollected1989} for more details on the techniques. We consider the \emph{Bernstein space} 
\begin{align}
PW_\Omega^\infty(\mathbb{R})
=\big\{f \in L^\infty(\mathbb{R}): \mathrm{supp}(\hat{f}) \subset [-\Omega/2,\Omega/2]\big\},
\end{align}
defined similarly to the Paley-Wiener space \eqref{eq:SWK}, but with respect to the $L^\infty$-norm. In the definition, the Fourier transform should be interpreted distributionally. By the Paley-Wiener theorem, each function $f \in PW_\Omega^\infty(\mathbb{R})$ extends to an entire function (on $\mathbb{C}$) but we shall not need this fact.

\smallskip

\noindent {\bf Step 1: Sampling in the Bernstein space.} We show that there exists a constant $\beta^\infty(\Omega,\delta,\varepsilon,r)$ such that
\begin{align}\label{eq_proof_0}
||f||_\infty \leq \beta^\infty(\Omega,\delta,\varepsilon,r) \sup_{k \in \mathbb{Z}} |f(x_k)|.
\end{align}
We use the notation $C_X f = (f(x_k))_{k\in\mathbb{Z}}$ also for $f \in PW^\infty_\Omega(\mathbb{R})$.
Fix $\delta, \varepsilon, r>0$ and, for the sake of contradiction, assume
that no adequate constant $\beta^\infty(\Omega,\delta,\varepsilon,r)$ can be found. Then there exist a sequence of separated sets $X_N=\{x_{N,k}:k\in\mathbb{Z}\}$ such that     \begin{equation}\label{eq:X_N_initial}
    \inf\limits_{j,k\in\Z, j\neq k}|x_{N,j}-x_{N,k}|\geq \delta >0,
    \qquad
    \inf\limits_{t\in\R} \tfrac{\#X_N\cap [t-r,t+r]}{2r} \geq \Omega+\varepsilon, 
\end{equation}
    and a sequence of functions $\{f_N\}_{N\in\N} \subset PW^\infty_\Omega(\R)$ such that $\norm{f_N}_{L^\infty(\R)}=1$ and
    \begin{equation}        \lim\limits_{N\to\infty}\norm{C_{X_N}f_N}_{\ell^\infty(\Z)} = 0.
    \end{equation}    
    Due to the normalization, for each $N$ there exists a point $z_N\in\R$ such that $|f_N(z_N)|\geq \frac{3}{4}$. Let us set $g_N = f_N(\cdot+z_N)\in PW_\Omega^\infty(\R)$ and $Y_N=X_N-z_N=\{y_{N,k}:k\in\mathbb{Z}\}=\{x_{N,k}-z_N:k\in\mathbb{Z}\}$. Then, $\|g_N\|_\infty=1$,
    \begin{equation}    \lim\limits_{N\to\infty}\norm{C_{Y_N} g_N}_{\ell^\infty(\Z)}
    = \lim\limits_{N\to\infty}\norm{C_{X_N} f_N}_{\ell^\infty(\Z)} =0,
    \end{equation}    \begin{equation}\label{eq:g_N_int_bound}
    |g_N(0)| = |f_N(z_N)|\geq \tfrac{3}{4},
    \end{equation}
    and
    \begin{align}\label{eq_proof_1}
 \inf\limits_{j,k\in\Z, j\neq k}|y_{N,j}-y_{N,k}|\geq \delta >0,
    \qquad
    \inf\limits_{t\in\R} \tfrac{\#Y_N\cap [t-r,t+r]}{2r} \geq \Omega+\varepsilon. 
    \end{align}
    Since $\|g_N\|_\infty =1$, by passing to a subsequence we may assume that $g_N \to g \in PW^\infty_\Omega(\mathbb{R})$ is
    the $\sigma(L^\infty,L^1)$ topology.
    Similarly, we can form the atomic measures
    $\sum_{k\in\mathbb{Z}} \delta_{y_{N,k}}$ and use \eqref{eq_proof_1} to pass to a subsequence and extract a \emph{weak limit set} $Y=\{y_k:k\in\mathbb{Z}\} \subset \mathbb{R}$ such that
$\sum_{k\in\mathbb{Z}} \delta_{y_{N,k}} \to 
\sum_{k\in\mathbb{Z}} \delta_{y_k}$ in the $\sigma(C_c^*,C_c)$ topology, see, e.g. \cite[p.~344--347]{BeurlingCollected1989} or \cite[Sec.~4]{MR3336091}. When passing to the limit, the uniform properties \eqref{eq_proof_1} are preserved and we also have
\begin{align}\label{eq_proof_2}
 \inf\limits_{j,k\in\Z, j\neq k}|y_{j}-y_{k}|\geq \delta >0,
    \qquad
    \inf\limits_{x\in\R} \tfrac{\#Y\cap [t-r,t+r]}{2r} \geq \Omega+\varepsilon. 
    \end{align}
Since $\|g_N\|_\infty = 1$, Bernstein's inequality \cite[Chap.~IX]{Levin1980} implies that $g_N \to g$ uniformly on compact sets. Thus, for any open and bounded interval $I \subset \mathbb{R}$, 
    \begin{equation}
        \sup\limits_{y\in Y\cap I}|g(y)| \leq \liminf\limits_{n\to\infty}  \sup\limits_{y\in Y_N\cap I}|g_N(y)|=0.
    \end{equation}
    We conclude that $g \equiv 0$ on $Y$.
    On the other hand, \eqref{eq_proof_2} imply that $Y$ is uniformly separated and $D^-(Y)\geq \Omega+\varepsilon$. Hence, $Y$ is a sampling set for $PW_\Omega^\infty(\R)$ \cite[p.~346]{BeurlingCollected1989} and therefore $g$ is the zero function. 
    However, \eqref{eq:g_N_int_bound} implies that $g$ cannot be zero. 
    We have reached a contradiction, and thus \eqref{eq_proof_0} must indeed hold.

\smallskip

\noindent {\bf Step 2:  {Localization}.} 
Let $\Omega':=\Omega+\varepsilon/2$, so that
\begin{align}
\inf\limits_{t\in\R} \tfrac{\#X\cap [t-r,t+r]}{2r} \geq \Omega'+\varepsilon/2.
\end{align}
Let $\eta \in C^\infty(\mathbb{R})$ with $\mathrm{supp}(\eta) \subset [-1/4,1/4]$ and $\int_\R \eta(t)\,dt =1$. For each $x \in \mathbb{R}$ we consider
\begin{align}
g_x(y) \coloneqq f(y) \cdot\hat{\eta}\left(\varepsilon(y-x)\right), \qquad y \in \mathbb{R}.
\end{align}
Then $g_x(x) = f(x)$ and $g_x \in PW^\infty_{\Omega+\varepsilon/2}(\mathbb{R})$. By Step 1, we conclude that
\begin{align}
|f(x)| = |g_x(x)| \leq \beta \sup_{k\in\mathbb{Z}} |f(x_k)| \, |\hat{\eta}(\varepsilon(x_k-x))|,
\end{align}
where $\beta = \beta^\infty(\Omega+\varepsilon/2,\delta, \varepsilon/2,r)$. Hence,
\begin{align}
\int_{\mathbb{R}} |f(x)|^2\,dx
&\leq \beta^2 
\int_{\mathbb{R}}
\sup_{k\in\mathbb{Z}} |f(x_k)|^2 |\hat{\eta}(\varepsilon(x_k-x))|^2\,dx
\\
&\leq \beta^2 
\int_{\mathbb{R}}
\sum_{k\in\mathbb{Z}} |f(x_k)|^2 |\hat{\eta}(\varepsilon(x_k-x))|^2\,dx
\\
&\leq \beta^2 \sum_{k\in\mathbb{Z}} |f(x_k)|^2 
\int_{\mathbb{R}}
|\hat{\eta}(\varepsilon x)|^2\,dx
\\
&\leq \frac{\beta^2}{\varepsilon} \|\eta\|^2_{L^2(\mathbb{R})} \sum_{k\in\mathbb{Z}} |f(x_k)|^2,
\end{align}
and we can take $A(\Omega, \delta,\varepsilon,r) = (\beta^\infty(\Omega+\varepsilon/2,\delta, \varepsilon/2,r))^{-2} \frac{\varepsilon}{\|\eta\|_2^2}$. (The localization argument used here goes back to Beurling; the optimized version presented here is from \cite[Sec.~2]{OlevskiiUlanovskii2012}.)
\qed

\end{document}